\newcommand{\figs}{.}
\theoremstyle{plain}
\newtheorem{theorem}{Theorem}[section]
\newtheorem{lemma}[theorem]{Lemma}
\newtheorem{prop}[theorem]{Proposition}
\newtheorem{corol}[theorem]{Corollary}
\newtheorem{definition}[theorem]{Definition}
\newtheorem{remark}[theorem]{Remark}
\newcommand{\Z}{{\mathbb Z}}
\newcommand{\R}{{\mathbb R}}
\newcommand{\barr}[1]{\begin{array}{#1}}        
\newcommand{\earr}{\end{array}}			
\newcommand{\MAT}[2]{\left(\barr{#1} #2 \earr \right)}
\newcommand{\mt}{\theta}
\newcommand{\mO}{\Omega}
\newcommand{\be}{\begin{eqnarray}}
\newcommand{\ee}{\end{eqnarray}}
\newcommand{\beno}{\begin{eqnarray*}}
\newcommand{\eeno}{\end{eqnarray*}}
\newcommand{\equivalent}{\Leftrightarrow}
\newcommand{\conv}{\rightarrow}
\newcommand{\cK}{{\mathcal K}}
\newcommand{\cO}{{\mathcal O}}
\newcommand{\cT}{{\mathcal T}}
\newcommand{\cR}{{\mathcal R}}
\newcommand{\cU}{{\mathcal U}}
\newcommand{\cH}{{\mathcal H}}
\newcommand{\cV}{{\mathcal V}}
\newcommand{\Dt}{{\Delta t}}
\newcommand{\Dx}{{\Delta x}}
\newcommand{\MODIF}[1]  {#1} 
\newcommand{\vv}{\MODIF{\vartheta}}            
\definecolor{OliveGreen}{cmyk}{0.64,0,0.95,0.40}
\newif\ifUseBibTeX  
\newcommand{\eps}{\varepsilon}
\begin{document}

\title[Computation of Avoidance Regions for Driver Assistance Systems]
{Computation of Avoidance Regions for Driver Assistance Systems by Using a Hamilton-Jacobi
Approach}

\author[I. Xausa, R. Baier, O. Bokanowski, M. Gerdts]
{
Ilaria Xausa
\address{%
Institut fuer Mathematik und Rechneranwendung,
 Fakultaet fuer Luft- und Raumfahrttechnik,
 Universitaet der Bundeswehr Muenchen,
 D-85577 Neubiberg/Muenchen, Germany.
Email: \texttt{ilariaxausa@gmail.com}}%
\and
Robert Baier
\address{%
Chair of Applied Mathematics, University of Bayreuth, 95440 Bayreuth, Germany.
Email: \texttt{robert.baier@uni-bayreuth.de}}%
\and
Olivier Bokanowski
\address{%
Universit{\'e} de Paris, Laboratoire Jacques-Louis Lions (LJLL), F-75005 Paris, France,
{\&}
Sorbonne Universit{\'e}, LJLL, F-75005 Paris, France
Email: \texttt{olivier.bokanowski@math.univ-paris-diderot.fr}}%
\and
Matthias Gerdts
\address{%
Institute of Applied Mathematics and Scientific Computing, 
Department of Aerospace Engineering, Bundeswehr University Munich, 
Werner-Heisenberg-Weg 39, 85577 Neubiberg, Germany.
Email: \texttt{matthias.gerdts@unibw.de}}%
}
\thanks{This work was partially supported by the EU under the 
7th Framework Programme Marie Curie Initial Training Network
``FP7-PEOPLE-2010-ITN'', SADCO project, GA number 264735-SADCO}


\date{
      \today 
}
\maketitle
\renewcommand{\thefootnote}{\arabic{footnote}}
\pagestyle{myheadings}
\thispagestyle{plain}



\begin{abstract}
We consider the problem of computing safety regions, modeled as nonconvex backward reachable sets, for
a nonlinear car collision avoidance model with time-dependent obstacles.
The Hamilton-Jacobi-Bellman framework is used.
A new formulation of level set functions for obstacle avoidance
is given and sufficient conditions for granting the obstacle
avoidance on the whole time interval are obtained, even though the conditions are checked only at discrete times.
Different scenarios including various road configurations, different geometry of vehicle and obstacles, as well as fixed or moving obstacles, 
are then studied and computed. 
Computations involve solving nonlinear partial differential equations
of up to five space dimensions plus time with nonsmooth obstacle representations,
and an efficient solver is used to this end. 
A comparison with a direct optimal control approach 
is also done for one of the examples.
\end{abstract}


\medskip

\keywords{\small {\bf Keywords:} collision avoidance, Hamilton-Jacobi-Bellman equations, backward reachable sets, level set approach,
high dimensional partial differential equations.

\section{Motivation}

The paper investigates optimal control approaches for the detection of potential collisions in car motions.
The aim is to identify and to compute safety regions for the car by means of a reachable set analysis.
Related techniques for collision detection in real time have been developed in \cite{Althoff2010a} (see also reference therein)
using reachability analysis with zonotopes and linearized dynamics. Reachable set approximations 
have also been obtained through zonotopes in \cite{leGuernic2009}, \cite{Althoff2011f},
\MODIF{\cite{Althoff2015a} (CORA matlab toolbox)},
or through polytopes in \cite{leGuernic2009}, \cite{fal-et-al-2012}.
For an overview of methods see also \cite{bok-for-zid-2010-2,bai-ger-xau-2013}. The new contributions in this paper 
are the ability to approximate nonconvex reachable sets for general nonlinear dynamics accurately while taking into 
account complicated scenarios with obstacles.
The first aim of the paper is to present a verification tool 
for safety systems, illustrated on a car avoidance model,
compare \cite{Nielsson2014,IlariaPHD}. 
The proposed method has the advantage to be capable of approximating the reachable 
set without relying on convex overestimation 
or underestimation. It also avoids a linearization of the nonlinear dynamics.

It is not primarily intended to use this techniques in real time, however, 
a potential approach towards real time computations would be to create
a database of solutions for different scenarios and to use some online interpolation techniques.

It is well known that Hamilton-Jacobi (HJ) approach can be used to modelize and to compute reachable sets~\cite{mit-bay-tom-2005}.
A general setting for taking state constraints (or obstacles) into account for the HJ setting is given in \cite{bok-for-zid-2010-2}, 
the approach we use in the present work.
In \cite{desilles2012collision} the collision avoidance of an unmanned aerial vehicle} (UAV) is modelized as an
infinite horizon control problem with obstacle \MODIF{avoidance} and also solved by using an HJ approach.

For a general presentation of HJ equations we refer to the textbook~\cite{bar-cap-BOOK}. 
A large panel of approximation methods are furthermore available.
Finite difference methods for the approximation 
of nonlinear HJ equations were first proposed in \cite{Crandall-Lions-84}.
Precise numerical schemes (finite-difference type schemes) were later on developed in \cite{osh-shu-91}.
We refer to \cite{SHU-SURVEY-DG-for-HJ} and \cite{fal-fer-BOOK} for recent surveys on related high-order discretization methods. 

In this work, 
we also apply the HJ framework in order to deal with time dependent state constraints as detailed in~\cite{bok-zid-ifac-2011}.

The motion of the car is modeled by the "point mass" model, which is a nonlinear $4$-state model 
(see section~\ref{subsec:4d_model})
with two controls for acceleration/deceleration and for the yaw rate.
More precise car model exists (as the one in \cite{Ger06b}). 
However, the 4-state model is often used for reference in computations and is more easy
to handle numerically by the Hamilton-Jacobi approach.


Once an obstacle
has been detected by suitable sensors (e.g.~radar, lidar), 
our approach can be used to decide whether a collision is going to happen or not. 

In order to \MODIF{compute} the reachable sets, the ROC-HJ solver \cite{rochj} will be used, 
considering several different car models and scenarios.
The software solves Hamilton-Jacobi Bellman (HJB) equations and can be used for \MODIF{computing} the
reachable sets as well as optimal trajectories.
Moreover, for one scenario, we shall also verify our simulations by the DFOG method 
of \cite{bai-ger-xau-2013} (using the OCPID-DAE1 solver \cite{ocode}).
\MODIF{For solving the HJB equations, a numerical method using a mesh grid is introduced,
so numerical errors may occur.
However, as the mesh grid is refined, the numerical error will converges to zero.
Such errors are analyzed in the first example of the numerical section (see Table~\ref{tab:1}). 
}


The plan of the paper is the following.
In Section~2, we consider the 4-dimensional 
point mass model for a car and we describe the problem of the backward reachable set computation 
under different types of nonsmooth state constraints, 
\MODIF{and also introduce level set functions to represent the reachable sets.}
In Section~3, the HJB approach is briefly recalled in our setting.
In Section~4, a general way to construct explicit level set functions associated to state constrains 
is introduced, and a new procedure with respect to collision avoidance is described in more details, 
\MODIF{with particular focus also for the avoidance of rectangular vehicles in the HJB framework.}
Section~5 contains several numerical examples for collision avoidance scenarios, showing the relevance of our approach.
Finally a conclusion is made in Section 6, where we also outline some ongoing works using the HJB approach for collision avoidance.

\section{Problem setting and modelling}

\subsection{Presentation of the problem}
The main tasks in collision avoidance are
to reliably indicate future collisions and -- if possible -- to provide escape 
trajectories if such exist. 
In particular once an obstacle has been detected by suitable sensors (e.g.~radar, lidar), 
we want to be able to decide whether a collision is 
going to happen or not. 

Let $U$ be a nonempty compact subset of $\R^m$ with $m\geq 1$. Let $n\geq 1$ and 
$f:\R^n\times U\conv \R^n$ be Lipschitz continuous with respect to $(z,u)$.
Let
$$\cU:=\{u:[0,\infty)\conv U \ |\ \mbox{$u$ measurable}\}$$
be the set of control policies.
Given an initial state $z_0\in\R^n$, 
we denote by $z_{z_0}^u$
the \emph{absolutely continuous}
solution of the following nonlinear dynamical system
\be\label{eq:dynamics}
  & & \dot z(s)=f(z(s),u(s)) \quad \mbox{for }\ s\geq 0,\\
  & & z(0)=z_0.
\ee
\MODIF{(More precisely, \eqref{eq:dynamics} stands only for "almost every" $s\geq 0$ since the control 
$u(\cdot)$ is only a measurable function of the time.)}
Let $\Omega$ and $(\cK_s)_{s\geq 0}$ be nonempty closed sets of $\mathbb{R}^n$.

\begin{definition}\label{def:reachset}
The \emph{backward reachable set} 
associated to the dynamics $f$,
for reaching the target $\Omega$ within time $t$ and respecting the state constraints $(\cK_s)_{s\geq 0}$
is defined as:
\beno
  & \cR^{f}_{\Omega,(\cK_s)_{s}}(t) := 
  \bigg\{z_0\in\R^n \ |\  \exists u\in L^\infty((0,\infty),U):\ z_{z_0}^u(t)\in\Omega \\
  & \hspace{2cm} \mbox{and\ } z_{z_0}^u(s)\in\cK_s\ \mbox{for\ }s \in [0,t]\bigg\}. 
\eeno
If the context is clear, we use the abbreviation $\cR(t)$.
\end{definition}
The definition corresponds to the set of points 
$z_0$ such that there exists trajectory starting from $z_0$ ending inside the target area $\Omega$ 
at the given time $t\geq 0$, while satisfying the
state constraints.
In the application in the next subsection, $(\cK_s)_s$ will model that the car staying on the road and avoiding the obstacles.

\subsection{The 4-dimensional point mass model}
\label{subsec:4d_model}
The \emph{point mass model} used hereafter is a simplified four-dimensional nonlinear state model for the car,
where the controls are the acceleration/deceleration and yaw rate. 


\begin{figure}
\begin{center}
  \includegraphics[angle=0,width=7cm]{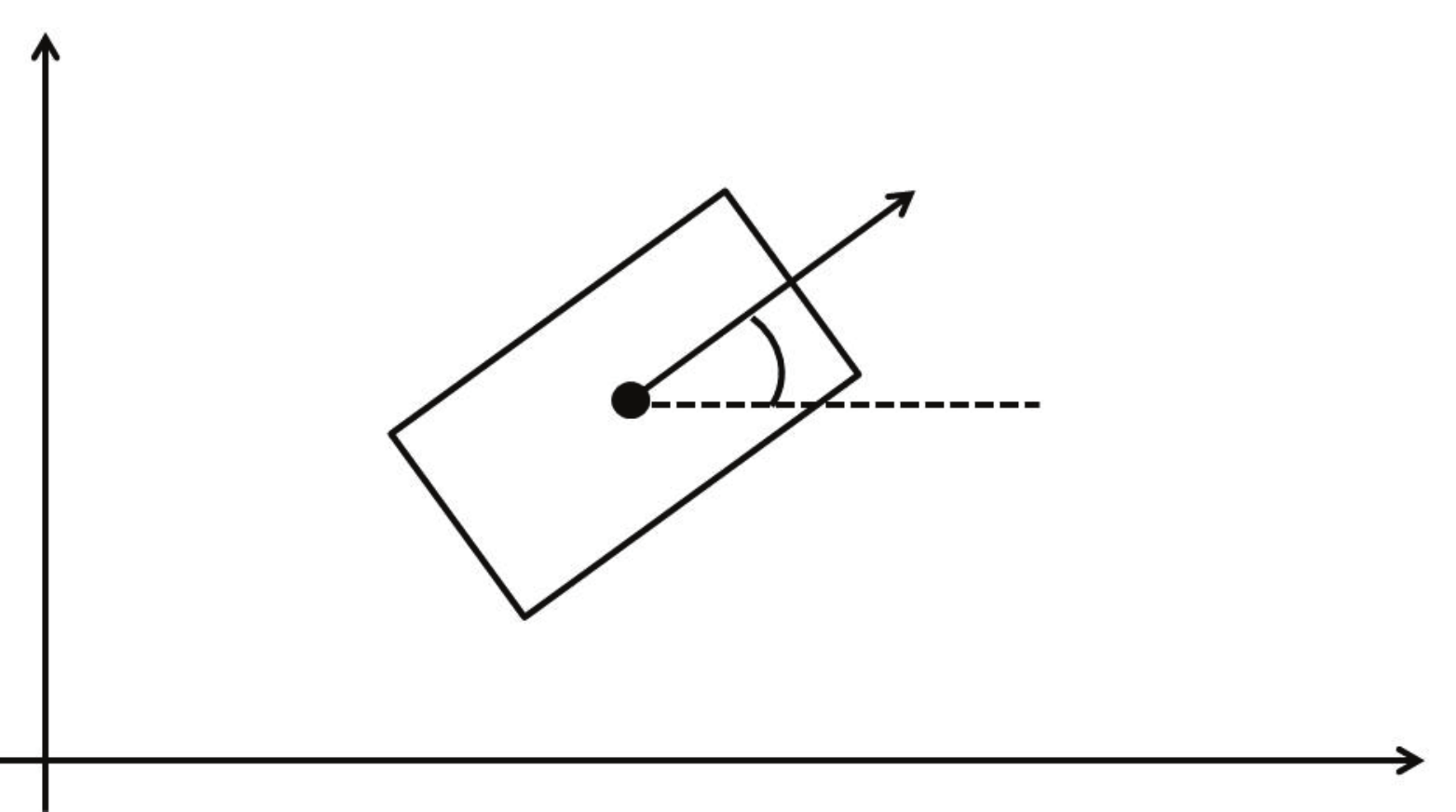}
\setlength{\unitlength}{1cm}
\begin{picture}(0,0)(0,0)
\put(-3.85,2.05){$\psi$}
\put(-4.8,1.5){$\mathcal V$}
\put(-3.4,2.8){$\vec{v}$}
  \put(-1,-0.1){$x$}
  \put(-7.4,3.1){$y$}
  \put(-8,0){$(x,y)$}
\end{picture}
\caption{\label{fig:4d_model} vehicle $\mathcal V$}
\end{center}
\end{figure}

The center of gravity of the car is identified with its coordinates $(x(t),y(t))$, 
$v(t)$ denotes the module of the velocity of the car and $\psi(t)$ is the yaw angle, 
see Fig.~\ref{fig:4d_model}.  
The equations of motion are then given by
\begin{subequations}\label{eq:model} 
\be
  x'   & = & v\cos(\psi)\\
  y'   & = & v\sin(\psi)\\
  \psi'& = & w\\
  v'   & = & a
\ee
\end{subequations}
where $a(t) \in [a_{min},a_{max}]$ is the control for acceleration (if $a(t) > 0$) or deceleration (if $a(t) < 0$), 
and $w(t)$ is the yaw rate, 
such that $|w(t)| \leq w_{max}$.  
 
\subsection{Level set functions for target and state constraints}
The \emph{target set} $\Omega$ will represent the target area. 
In our examples we will consider a region in the form of
\be\label{eq:omega}
  \Omega=\bigg\{z=(x,y,\psi,v)\in\mathbb{R}^4\ \mid\ x \geq x_{target}, y \geq y_{target}, |\psi|\leq \eps\bigg\}
\ee
for a small threshold $\eps>0$. 
The conditions on $x$ and $y$ realizes that the vehicle will reach a secure end position after the maneuver, while the conditions on
the yaw angle assures that the car is driving approximately horizontally so that
the car is not heading to leave the road after the maneuver.

An important tool, which will be used in this paper, is the notion of \emph{level set functions} associated to a given set.
For the target constraint $\mO$, we will associate a Lipschitz continuous function $\varphi:\R^n\rightarrow\R$ (here with $n=4$),
such that
$$ 
  z\in \mO \quad \Longleftrightarrow \quad \varphi(z)\leq 0.
$$
Indeed there always exists such a function, since one can use the \emph{signed distance} to $\mO$ 
($\varphi(z)=d(z,\mO)$ if $z\notin \mO$, and $\varphi(z)=-d(z,\R^n\backslash\mO)$ if $z\in \mO$).
For instance, in the case of \eqref{eq:omega}, 
we can consider:
$$ 
  \varphi(z) := \max\bigg(-(x-x_{target}),\ -(y-y_{target}),\ |\psi|-\eps\bigg).
$$
More complex level set functions will be constructed in Section \ref{sec:4}
but they may no more correspond to usual distance functions.

\medskip

Now for the case of the state constraints, different 
road geometries will be considered, as well as different fixed and/or moving obstacles that the car must avoid.
In all cases, we will manage to model all state constraints
by a simple, explicit, level set function $g$. 

In the particular case of fixed configurations, state constraints are of the form $z(s)\in \cK$ for all $s\in[0,t]$
(for a given closed set $\cK$ of $\R^n$),
and we will construct a Lipschitz continuous function $g:\R^n\rightarrow\R$ such that 
$$ 
  z\in \cK \quad \Longleftrightarrow \quad g(z)\leq 0.
$$

In the general case of moving obstacles, the state constraints are of the form
$$
  z(s)\in \cK_s, \quad \forall s\in[0,t],
$$
where $(\cK_s)_{s\geq 0}$ is a family of closed sets.
Then we need to introduce a time-dependent level set function (still denoted $g$)
$g:\R^n\times \R \rightarrow \R$, such that, for any $s\geq 0$,
$$
  z\in \cK_s \quad \Longleftrightarrow \quad g(z,s)\leq 0.
$$
The obstacles and corresponding level set functions will be more precisely described and constructed in Section~\ref{sec:4}. 

\section{Hamilton-Jacobi-Bellman approach}
In order to apply the HJB approach of \cite{bok-zid-ifac-2011}, assumptions (H1)-(H3)
will be assumed on the objects modelling the car traffic scenario as defined in the previous section. 
Assumption (H1) is natural for controlled systems and fulfilled for our model.
Assumption (H2) and (H3) allow for nonsmooth representation of targets and state constraints.

\begin{itemize}
\item[(H1)]\label{h1}
 $f:\mathbb{R}^n\times U\rightarrow\mathbb{R}^n$ is a continuous function and Lipschitz continuous in $z$
 uniformly in $u$, 
 i.e., $\exists L\geq0, \forall z_1,z_2$, $\forall u\in U$:  $|f(z_1,u)-f(z_2,u)| \leq L |z_1-z_2|$.

\item[(H2)]\label{h3} 
$\mO$ is a nonempty closed set of $\R^n$.
Let $\varphi:\R^n\conv\R$ be Lipschitz continuous
and a level set function for the target, i.e.,
$$\mbox{$\varphi(z)\leq 0$ $\equivalent$ $z\in \mO$.}$$
\item[(H3)]\label{h4} 
 $(\cK_s)_{s\in[0,T]}$ are a family of subsets of $\R^n$ 
 such that there exists a Lipschitz continuous level set function $g:\R^n\times\R\conv \R$ with
 $$
    \mbox{ $g(z,s)\leq 0 $ $\equivalent$ $z\in \cK_s$ 
           \quad for all $s\in[0,T]$, $z\in\R^n$.
    } 
 $$
\end{itemize} 
The existence of a function $g$ with the Lipschitz regularity is discussed in \cite{bok-zid-ifac-2011}.

Furthermore, we will assume the following convexity assumption on the dynamics: 
\begin{itemize}
\item[(H4)]
 For all $z\in \R^n$ the velocity set $f(z,U)$ is convex.
\end{itemize}
In our case, for the dynamics given in \eqref{eq:model} (resp.~\eqref{eq:ext_F})
this convexity assumption is satisfied since the map $u \mapsto f(x,u)$ is affine. It allows to have the compactness of the set of 
trajectories and therefore the existence of a minimum for the value function \eqref{eq:value} (or \eqref{eq:value2}).


However this condition is not mandatory and we could use the Filippov-Wa\'zewski 
relaxation theorem 
(see~\cite[Sec.~2.4, Theorem~2]{aub-cel-BOOK})
to work with a convexified dynamics in the case $f(x,U)$ is nonconvex.



We now focus on backward reachable sets
$\cR^{{f}}_{\Omega,(\cK_s)}(t)$ associated to a dynamics $f$
and how to compute such reachable sets. 


\begin{remark}
Assume that there is no time dependency in the dynamics, and that 
$\cK_s=\cK$ for all $s \geq 0$.
Let the {\em capture basin}, or backward reachable set until time $t$,
be defined by 
\be \label{eq:brs}
  Cap^{{f}}_{\Omega,(\cK_s)_{s}}(t) := 
  \bigg\{z_0\in\R^n\,\vert\, \exists \tau \in[0,t],\ \exists u\in \cU,\ z_{z_0}^u(\tau)\in\Omega\nonumber\\
     \ \mbox{and}\ z_{z_0}^u(s)\in\cK_s \ \mbox{for all $s \in [0,\tau]$}\, 
  \bigg\}
  \label{eq:capt_basin2}
\ee
(following~\cite[Subsec.~1.2.1.2]{ABS-P}) so that we consider trajectories that can reach the target until time $t$.
We will also use the abbreviation $Cap(t)$.
Then the set~\eqref{eq:brs} is also a reachable set for a modified dynamics $\hat f$:
$$
  Cap^{f}_{\Omega,\cK}(t) = \cR^{\hat f}_{\Omega,(\cK_s)}(t) 
$$
where $\tilde f(z_0,\hat u):=\lambda f(z_0,u)$ for $\hat u =(u,\lambda)\in\hat U=U  \times [0,1]$
(see for instance \cite{mit-bay-tom-2005}). 
Here, a new virtual control $\lambda(\cdot)$ with $\lambda(s) \in [0,1]$ is introduced.
\end{remark}

We first consider the case of time-invariant state constraints (such as the road and non-moving obstacles), i.e.,
$$
   \cK_s:=\cK\quad \forall s \geq 0.
$$
Let us associate to this problem the following {\em value function} $\vv$:
\be\label{eq:value}
  \vv(z_0,t):=\inf_{u\in\cU} \  \max\left( \varphi(z^u_{z_0}(t)),\max_{s\in[0,t]}g(z^u_{z_0}(s)) \right).
\ee
Here, we simply denote $g(z,s)\equiv g(z)$.
Such a value function involving a supremum cost has been studied by Barron and Ishii in~\cite{Barron_Ishii89}.
As a consequence of \cite{bok-for-zid-2010-2}, we have:

\begin{prop}
The value function $\vv$ is a level set function for 
$\cR^f_{\Omega,(\cK_s)_{s}}(t)$
in the sense that the following holds:
\be\label{eq:captureandlevel}
  \cR^f_{\Omega,(\cK_s)_{s}}(t)
  =\{z_0 \in \R^n \,\vert\, \vv(z_0,t)\leq0\}
\ee
\end{prop}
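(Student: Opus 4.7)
The plan is to establish the claimed set equality by a double inclusion. The whole argument pivots on the defining property of the level set functions, namely that $\varphi(z)\leq 0 \Leftrightarrow z\in\Omega$ by (H2) and $g(z,s)\leq 0 \Leftrightarrow z\in\cK_s$ (used with $g(\cdot,s)\equiv g(\cdot)$ here) by (H3). Together these yield, for a fixed control $u\in\cU$ and trajectory $z^u_{z_0}$,
\[
  \max\!\left(\varphi(z^u_{z_0}(t)),\ \max_{s\in[0,t]} g(z^u_{z_0}(s))\right)\leq 0
  \ \Longleftrightarrow\
  z^u_{z_0}(t)\in\Omega\text{ and }z^u_{z_0}(s)\in\cK_s\ \forall s\in[0,t].
\]
This near-tautology is the heart of the proof.

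For the inclusion $\cR^f_{\Omega,(\cK_s)_s}(t)\subseteq\{\vv(\cdot,t)\leq 0\}$, I would take $z_0$ in the reachable set, pick a witness control $u\in\cU$ given by Definition~\ref{def:reachset}, and observe that the inner cost evaluated at this $u$ is $\leq 0$ by the displayed equivalence; hence the infimum $\vv(z_0,t)$ is also $\leq 0$. This direction is direct and uses no compactness.

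The converse inclusion is where the work lies. Assuming $\vv(z_0,t)\leq 0$, one can only extract a minimizing sequence $(u_n)\subset\cU$ with
\[
  \max\!\left(\varphi(z^{u_n}_{z_0}(t)),\ \max_{s\in[0,t]} g(z^{u_n}_{z_0}(s))\right)\leq \tfrac{1}{n}.
\]
The plan is to pass to the limit. Under (H1) and the convexity hypothesis (H4), the set of admissible trajectories on $[0,t]$ starting from $z_0$ is a compact subset of $C([0,t],\R^n)$ (Filippov's compactness theorem), so a subsequence $z^{u_n}_{z_0}$ converges uniformly on $[0,t]$ to some trajectory $z^u_{z_0}$ associated with a control $u\in\cU$. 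Lipschitz continuity of $\varphi$ and $g$ in the state variable then allows passing to the limit in the inequality above, yielding $\varphi(z^u_{z_0}(t))\leq 0$ and $g(z^u_{z_0}(s))\leq 0$ for every $s\in[0,t]$. By the equivalences above, $z^u_{z_0}(t)\in\Omega$ and $z^u_{z_0}(s)\in\cK_s$ on $[0,t]$, whence $z_0\in\cR^f_{\Omega,(\cK_s)_s}(t)$.

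The main obstacle is precisely this extraction step: without (H4) there is no guarantee that a limit of admissible trajectories is itself admissible, and the infimum defining $\vv$ need not be attained. The paper's comment just after (H4) indicates the fallback --- the Filippov--Wa\.zewski relaxation theorem ensures that working with the convexified velocity set produces the same reachable set, which would let the same argument go through after convexification.
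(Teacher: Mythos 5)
Your proof is correct. Note that the paper itself does not prove this proposition: it simply states it ``as a consequence of'' the reference \cite{bok-for-zid-2010-2}, so there is no in-paper argument to compare against line by line. Your double-inclusion argument supplies exactly the standard proof that the citation stands in for: the forward inclusion is the tautological direction driven by (H2)--(H3), and the reverse inclusion is where compactness of the trajectory set (Filippov's theorem under (H1) and (H4)) is needed to turn a minimizing sequence into an admissible limit trajectory, after which uniform convergence plus Lipschitz continuity of $\varphi$ and $g$ lets you pass to the limit in the sup-cost. This is consistent with the paper's own remark following (H4), which says that the convexity of $f(z,U)$ is there precisely to guarantee compactness of the set of trajectories and attainment of the infimum in \eqref{eq:value}, with the Filippov--Wa\'zewski relaxation theorem as the fallback when $f(z,U)$ is not convex --- the same fallback you identify. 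One small point worth making explicit if you write this up fully: the uniform limit of the trajectories $z^{u_n}_{z_0}$ is a priori only a solution of the differential inclusion $\dot z\in f(z,U)$, and you need a measurable selection argument to recover an actual control $u\in\cU$ realizing it; this is standard but is the step hiding behind your phrase ``associated with a control $u\in\cU$''.
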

In particular assumptions (H2) and (H3) are essential for \eqref{eq:captureandlevel} to hold.
Furthermore,  $v$ is the unique continuous \emph{viscosity solution} (in the sense of \cite[Sec.~I.3]{bar-cap-BOOK})
of the following Hamilton-Jacobi (HJ) equation
\begin{subequations}\label{eq:hjb-0}
\be
  \min\big(\partial_t \vv+\cH(z,\nabla_z \vv),\  \vv-g(z)\big)=0, & \quad &  t>0,\ z\in\R^n,\\   
  \vv(z,0)=\max(\varphi(z),g(z)), & \quad & z\in \mathbb{R}^{n},
\ee
\end{subequations}
where 
$$  \cH(z,p):=\max_{u\in U} (-f(z,u) \cdot p), \quad p \in \R^m,
$$
is the \emph{Hamiltonian}.

\MODIF{
\begin{remark}
  From the point of view of the vehicle obstacle avoidance problem, the value function $\vv$ has the following 
  property :
  $\vv((x_0,y_0,\psi_0,v_0),t)\leq 0$  if and only if it is possible to drive the vehicle from the initial
  position $z_0=(x_0,y_0,\psi_0,v_0)$ with dynamics~\eqref{eq:dynamics}, towards the target set $\mO$, 
  avoiding the obstacles (such as possible other obstacle vehicles) before time $t$.
\end{remark}
}

We now consider the computation of backward reachable sets for {\em time dependent state constraints}, and follow the approach of~\cite{bok-zid-ifac-2011}.
We consider~\eqref{eq:model} with the new state variable $\xi:=(z,\tau)$ and the "augmented" dynamics with values in $\R^5$:
\be
  \label{eq:ext_F}
  F(\xi,u):=\MAT{c}{f(z,u)\\ 1}.
\ee
This means, in solving the corresponding PDE, we have to consider an additional state variable:
the dynamics is augmented by the equation
\be
  \tau'& = & 1.
\ee

Let also be $\xi_0:=(z_0, 0)$
and trajectories $\xi_{\tau_0,\xi_0}^u$  associated to $F$, 
fulfilling
$$
   \dot\xi(s)=F(\xi(s),u(s)) \ \mbox{and}\ \xi(\tau_0)=\xi_0.
$$
For a fixed $T>0$, let 
\be
  & & \tilde{\Omega}:=\bigcup_{s\in [0,T]} \Omega\times\{s\} \ \equiv \  \mO\times[0,T]
  \label{eq:ext_target}
\ee
and
\be
  & & \tilde{\cK}:= \bigcup_{s\in[0,T]} \cK_s\times\{s\}.
  \label{eq:ext_state_constr}
\ee

Then it holds:
\begin{prop}
For all $t\geq0$, we have:
$$\forall s\in[0,t],\ 
  z_0\in \cR^f_{\Omega,(\cK_s)_{s}}(t) \iff (z_0,0) \in \cR^F_{\tilde{\Omega},\tilde{\cK}}(t).
$$
\end{prop}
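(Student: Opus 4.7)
The plan is to prove the equivalence by showing both directions rely on the same simple observation: since the augmented dynamics $F$ adds the equation $\dot\tau=1$ with initial value $\tau(0)=0$, any augmented trajectory starting at $(z_0,0)$ has the form $\xi^u_{0,(z_0,0)}(s)=(z^u_{z_0}(s),s)$. Once this identification is made, both containments reduce to unpacking the definitions of $\tilde\Omega$ and $\tilde\cK$. I will implicitly work under the assumption $t\in[0,T]$, which is needed for $\tilde\Omega$ and $\tilde\cK$ to be nonempty at time $t$.

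For the direction $(\Rightarrow)$, I would fix $z_0\in\cR^f_{\Omega,(\cK_s)_s}(t)$ and pick a control $u\in\cU$ realizing the reachability, so that $z^u_{z_0}(t)\in\Omega$ and $z^u_{z_0}(s)\in\cK_s$ for every $s\in[0,t]$. Using the same control for the augmented system starting at $\xi_0=(z_0,0)$, the time component evaluates to $\tau(s)=s$, so $\xi^u_{0,\xi_0}(t)=(z^u_{z_0}(t),t)\in\Omega\times[0,T]=\tilde\Omega$, and for each $s\in[0,t]$, $\xi^u_{0,\xi_0}(s)=(z^u_{z_0}(s),s)\in\cK_s\times\{s\}\subset\tilde\cK$. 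This exhibits $(z_0,0)$ as an element of $\cR^F_{\tilde\Omega,\tilde\cK}(t)$.

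For the direction $(\Leftarrow)$, I would start from $(z_0,0)\in\cR^F_{\tilde\Omega,\tilde\cK}(t)$ and extract a control $u\in\cU$ with $\xi^u_{0,(z_0,0)}(t)\in\tilde\Omega$ and $\xi^u_{0,(z_0,0)}(s)\in\tilde\cK$ for all $s\in[0,t]$. The key point is again that the second component of the augmented state is forced to equal $s$. The terminal condition then gives $z^u_{z_0}(t)\in\Omega$ (the interval condition $t\in[0,T]$ is automatic), and the state-constraint condition $(z^u_{z_0}(s),s)\in\bigcup_{\sigma\in[0,T]}\cK_\sigma\times\{\sigma\}$ forces $z^u_{z_0}(s)\in\cK_s$, which is exactly what is needed to conclude $z_0\in\cR^f_{\Omega,(\cK_s)_s}(t)$.

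There is no real obstacle: the proposition is essentially a definitional reformulation whose content is the bijection between controlled trajectories $z^u_{z_0}$ and augmented trajectories $\xi^u_{0,(z_0,0)}$ via the map $s\mapsto(z^u_{z_0}(s),s)$. The only subtle point worth mentioning is that the disjoint-in-time structure of $\tilde\cK$ (each slice $\cK_s$ lives in its own $\{s\}$-fiber) is what guarantees that a time-invariant constraint on the augmented state is exactly equivalent to a time-varying constraint on the original state, with no spurious matchings across different times.
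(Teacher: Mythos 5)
Your proof is correct and is exactly the argument the paper leaves implicit (the proposition is stated without proof as an immediate consequence of the identification $\xi^u_{0,(z_0,0)}(s)=(z^u_{z_0}(s),s)$ together with the fibered definitions of $\tilde\Omega$ and $\tilde\cK$). Your explicit caveat that the equivalence requires $t\in[0,T]$ (otherwise the $\tau$-component exits $[0,T]$ and the right-hand side becomes vacuously empty) is a genuine precision that the paper's ``for all $t\geq 0$'' glosses over.
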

We extend the definition of $\varphi$ by $\varphi(z,s):=\varphi(z)$ 
so that for any $\xi_0\in\R^n\times \R$ and $\tau\geq 0$ 
we can associate a value $w$ as follows:
\be\label{eq:value2}
  w(\xi_0,\tau):=\inf_{u\in\cU} \  \max\left( \varphi(\xi^u_{\xi_0}(\tau)),\max_{s\in[0,\tau]} g(\xi^u_{\xi_0}(s)) \right)
\ee
Then, one can verify that $w$ is Lipschitz continuous and the following theorem holds: 

\begin{theorem}
Assume (H1)-(H4) and consider $w$ from \eqref{eq:value2}.\\
$(i)$ For every $\tau \geq0$ we have:
$$  
  \cR^f_{\Omega,(\cK_{s})_{s}}(\tau)=\left\{z\in\R^n,\ w((z,0),\tau)\leq 0 \right\}.
$$
$(ii)$ $w$ is the unique continuous viscosity solution of 
\begin{subequations}\label{hjb-t}
\be
 & &  \min(\partial_\tau w+\cH((z,t),(\nabla_z w, \partial_t w)),\ w((z,t),\tau)-g(z))=0, \nonumber \\
 & & \hspace{6cm} \quad \tau>0,\ (z,t)\in \R^{n+1}, \\
 & & w((z,t),0)=\max(\varphi(z),g(z,t)),\quad (z,t)\in \R^{n+1}.
\ee
\end{subequations}
where for any $\xi=(z,t)$ and $(p_z,p_t)\in\R^n\times\R$:
$$
  \cH((z,t),(p_z,p_t)):=\max_{u\in U} (-f(z,u)\cdot p_z - p_t).
$$
\end{theorem}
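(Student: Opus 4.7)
The plan is to reduce both parts of the theorem to the time-invariant case already established for the value function $\vv$ in \eqref{eq:value} and the associated HJ equation \eqref{eq:hjb-0}, applied to the \emph{augmented} system with state $\xi=(z,t)\in\R^{n+1}$, dynamics $F$ from \eqref{eq:ext_F}, target $\tilde\Omega$ from \eqref{eq:ext_target} and state constraint $\tilde{\cK}$ from \eqref{eq:ext_state_constr}. Since $F$ is just $f$ augmented by the scalar equation $\tau'=1$, the augmented system is autonomous and satisfies (H1) and (H4) whenever $f$ does. Moreover, extending $\varphi$ by $\varphi(\xi):=\varphi(z)$ gives a Lipschitz level set function for $\tilde\Omega$ (up to trimming $t\in[0,T]$, which is automatic along the trajectories of $F$ starting from $(z_0,0)$ for $\tau\le T$), and the given Lipschitz function $g(z,t)$ is, by (H3), precisely a level set function for $\tilde{\cK}\subset\R^{n+1}$. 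So assumptions (H1)--(H4) hold for the augmented problem.

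For part (i), I would first record the trajectory identity $\xi^u_{(z_0,0)}(s)=(z^u_{z_0}(s),s)$, which follows immediately from $\tau'=1$ and $\tau(0)=0$. Combined with the preceding Proposition, this gives the set-level equivalence
\[
  z_0\in\cR^f_{\Omega,(\cK_s)_s}(\tau)\;\Longleftrightarrow\;(z_0,0)\in\cR^F_{\tilde\Omega,\tilde{\cK}}(\tau).
\]
Then I would apply the level-set characterization \eqref{eq:captureandlevel} to the \emph{time-invariant} augmented problem: the value function built from $\varphi$, $g$ and the augmented dynamics coincides, by definition, with $w$ in \eqref{eq:value2}, and it is nonpositive at $(z_0,0)$ exactly on $\cR^F_{\tilde\Omega,\tilde{\cK}}(\tau)$. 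Chaining these two equivalences yields the claim.

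For part (ii), I would observe that $w$ is defined by the same supremum-cost structure as $\vv$, but with the autonomous dynamics $F$. The general Barron--Ishii/\cite{bok-for-zid-2010-2} result then delivers two things: continuity and Lipschitz regularity of $w$ (inherited from Lipschitzness of $\varphi$, $g$ and $F$ together with the Grönwall bound on trajectories), and the fact that $w$ is the unique continuous viscosity solution of the analogue of \eqref{eq:hjb-0} for the augmented data. It remains only to compute the augmented Hamiltonian: for $\xi=(z,t)$ and dual variable $(p_z,p_t)$,
\[
  \cH(\xi,(p_z,p_t))=\max_{u\in U}\bigl(-F(\xi,u)\cdot(p_z,p_t)\bigr)=\max_{u\in U}\bigl(-f(z,u)\cdot p_z-p_t\bigr),
\]
which is exactly the expression stated in the theorem. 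The initial condition $w((z,t),0)=\max(\varphi(z),g(z,t))$ is the value at $\tau=0$ read off directly from \eqref{eq:value2}.

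The main obstacle, and really the only non-cosmetic point, is to justify rigorously that the abstract time-invariant framework of \cite{bok-for-zid-2010-2,bok-zid-ifac-2011} applies to the augmented system despite the fact that $\tilde\Omega$ and $\tilde{\cK}$ are not compact in the $t$-variable and that $\varphi$ as extended is not a level set function for $\tilde\Omega$ \emph{globally} on $\R^{n+1}$ (only on the time strip visited by admissible trajectories). This is handled by restricting attention to $\tau\in[0,T]$ and noting that $\tau(s)=s$ pins down the time-component of every admissible augmented trajectory, so the constraint $t\in[0,T]$ is enforced for free and $\varphi$ plays the role of a level set function along these trajectories. Once this point is made, parts (i) and (ii) follow from the cited results without further calculation.
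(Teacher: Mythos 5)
Your proposal is correct and follows essentially the same route as the paper: the paper proves this theorem precisely by the augmentation $\xi=(z,t)$, $\tau'=1$, the trajectory identity behind the preceding Proposition, and an appeal to the time-invariant results of Barron--Ishii and \cite{bok-for-zid-2010-2,bok-zid-ifac-2011}, with the augmented Hamiltonian computed exactly as you do. Your explicit discussion of why the extended $\varphi$ and the sets $\tilde\Omega$, $\tilde{\cK}$ still fit the cited framework is a detail the paper leaves implicit, but it does not change the argument.
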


Once the backward reachable set is characterized by a viscosity solution of~\eqref{hjb-t}, 
it is possible to use a pde solver to find the solution on a grid.
We refer to \cite{bok-for-zid-2010-2} for the specific approximation of \eqref{eq:hjb-0}
(resp.~\eqref{hjb-t}).

%

\medskip

\paragraph{\bf Minimal time function and optimal trajectory reconstruction.}
In the case of fixed state constraints (i.e., $\cK_s\equiv \cK$),
the \emph{minimal time function}, denoted by $\cT$, is defined by:
$$
  \cT(z_0):=
   \inf \{t\in[0,T] \,|\,  \exists u\in\cU: z_{z_0}^{u}(t)\in\Omega \mbox{\ and\ }z_{z_0}^{u}(s)\in\cK\ 
          \mbox{for all $s\in [0,t]$}  \},
$$
and if no such time $t$ exists then we set $\cT(z_0)=\infty$.
Let $\vv$ be defined as in~\eqref{eq:value}.
It is easy to see that the function satisfies
\be\label{eq:Tmin}
  \cT(z_0)=\inf\{t\in[0,T],\ \vv(z_0,t)\leq 0\}.
\ee
\MODIF{In particular, if $\cT(z_0)=+\infty$, then there is no $t\in[0,T]$ such that $\vv(z_0,t)\leq0$.}
Notice that $\cT$ can be discontinuous even though $\vv$ is always Lipschitz continuous.
No controllability assumptions are used in the present approach.

The optimal trajectory reconstruction is then obtained by minimizing the minimal time function along possible trajectories,
see for instance Falcone~\cite{falcone}, Soravia~\cite{soravia}.  
From the numerical point of view this allows to store only the minimal time function and 
not the value function which would have one more variable (the time variable).
(Note that precise trajectory reconstruction results from the value function can be found in \cite{row-vin-91,ass-bok-des-zid-15}.)
The minimal time function can be shown to satisfy a dynamic programming principle of the 
form:
\be\label{eq:T-dpp}
  \inf_{u(.)}\cT(z^u_{\xi}(h)) = \cT(\xi)-h, \quad
  \text{for all $0<h\leq\cT(\xi)<\infty$,}
\ee
where the infimum is over all control functions $u(.)\in \cU$ such that the trajectory $z^u_\xi(.)$ satisfy the state constraints.

More precisely, let $h>0$ be a given time step, let $N\geq 1$ be some integer (a maximal number of iterations). 
Assume that the starting point $z_0$ is satisfying $\cT(z_0)<\infty$,
and that we aim to reach $z_n:=z^u(t_n)\in\Omega$
at some future time $t_n=nh>0$, with $n\leq N$. This is equivalent to require $\cT(z_n)=0$. 
For a given small threshold $\eta>0$ and for a given control discretization of the set 
$U$, say $(u_k)_{k=1,\dots,N_u}\subset U$,
we consider the following iterative procedure:
\beno
  & & \hspace{-0.5cm} \mbox{while $n<N$ and $\cT(z_n)\geq \eta$ do:}\\
  & & \qquad \mbox{Find}\  u_{k^*} := {\mathrm{argmin}}_{u_k} \cT\big( \bar z^{u_k}_{z_n}(h) \big)\\
  & & \qquad \mbox{Set}\  z_{n+1}:= \bar z^{u_{k^*}}_{z_n}(h)\\
  & & \qquad \mbox{if}\ \cT(z_{n+1})=\infty \ \mbox{stop}, \mbox{otherwise set $n\leftarrow n+1$}.
\eeno
\MODIF{More precisely,} the procedure is stopped if $\cT(z_{n+1})=+\infty$ (we cannot reach the target from this point),
if $\cT(z_{n+1})<\eta$ (target reached up to the threshold $\eta$), or if the maximal number of iterations is reached.
In the algorithm, $\bar z^{u_k}_{z_n}(h)$ denotes a \emph{one-step second-order Runge-Kutta approximation} 
of the trajectory with fixed control $u_k$ on $[t_n,t_{n+1}[$. For instance,
the \emph{Heun scheme with piecewise constant selections} uses the iteration:
$$
   \bar z^{u_k}_{z_n}(h):= z_n + \frac{h}{2} (f(z_n,u_k) + f(z_n + h f(z_n,u_k),u_k)).
$$
It is also possible to do smaller time steps between $[t_n,t_n+h]$ in order to improve the precision for a given control $u_k$. 
Nevertheless, numerical observations show that the approximation 
is in general more sensitive to the control discretization $(u_k)_{k=1,\ldots,N_u}$ of the set $U$.

In the time-dependent case this minimal time function can be defined in a similar way from the value $w$ (we refer 
to \cite{bok-zid-ifac-2011} for details), and the optimal trajectory reconstruction follows the same lines 
with the "augmented" dynamics~\eqref{eq:ext_F}.

\begin{remark}\label{rem:3.4}
Notice that the discretization in the optimal trajectory reconstruction can be done completely independently
from the discretization method used for solving the HJ equations as well as the minimal time function.
\end{remark}

When reconstructing the trajectory with the minimal time function 
in the iteration above, 
a simple yet important property is:
\begin{lemma}\label{lem:3.4}
Assume that $\cT(z_n)<+\infty$. Then $g(z_n)\leq 0$.
This means in particular that the state $z_n$ satisfies the state constraints.
\end{lemma}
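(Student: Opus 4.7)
The plan is to unpack the definition of the minimal time function $\cT$ and then invoke the level set characterization provided by hypothesis (H3). The statement is essentially a tautology once the right piece of the definition is isolated, so my proposal will be short and structured around making explicit what happens at the initial time $s=0$.

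First I would observe that, by definition of $\cT$, the assumption $\cT(z_n)<+\infty$ means that the set
\[
  \{t\in[0,T]\,:\,\exists u\in\cU,\ z^{u}_{z_n}(t)\in\Omega \text{ and } z^{u}_{z_n}(s)\in\cK\ \forall s\in[0,t]\}
\]
is nonempty. Therefore there exist $t\in[0,T]$ and $u\in\cU$ with $z^{u}_{z_n}(t)\in\Omega$ and $z^{u}_{z_n}(s)\in\cK$ for every $s\in[0,t]$. Specializing this inclusion at $s=0$ and using $z^{u}_{z_n}(0)=z_n$ yields $z_n\in\cK$. Hypothesis (H3) is then applied in the form $\cK=\{z\in\R^n\,:\,g(z)\leq 0\}$, giving immediately $g(z_n)\leq 0$.

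An equivalent, slightly more formal route would go through the value function via \eqref{eq:Tmin}: $\cT(z_n)<+\infty$ yields some $t\in[0,T]$ with $\vv(z_n,t)\leq 0$, and since for every $u\in\cU$ the quantity inside the infimum in \eqref{eq:value} is bounded below by $g(z^{u}_{z_n}(0))=g(z_n)$, one obtains $g(z_n)\leq \vv(z_n,t)\leq 0$. I would probably keep only the first, trajectory-based, argument for clarity.

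There is essentially no obstacle to overcome. The only point deserving a brief comment is the degenerate case $\cT(z_n)=0$, but even then the state-constraint inclusion on the (possibly degenerate) interval $[0,0]$ still forces $z_n\in\cK$, so the conclusion $g(z_n)\leq 0$ is unaffected.
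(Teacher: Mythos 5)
Your proof is correct, and your primary (trajectory-based) argument is genuinely different from the one in the paper. The paper argues through the value function: setting $\tau_n:=\cT(z_n)<\infty$, it uses the characterization \eqref{eq:Tmin} to get $\vv(z_n,\tau_n)\leq 0$, and then the pointwise bound $g(z_n)\leq \vv(z_n,\tau_n)$ coming from the fact that $g(z^u_{z_n}(0))=g(z_n)$ sits inside the maximum defining $\vv$ for every control $u$ --- which is exactly your second, ``more formal'' route. Your preferred route instead goes straight to the trajectory definition of $\cT$: finiteness of the infimum means the admissible set is nonempty, so some trajectory from $z_n$ satisfies the state constraint on $[0,t]$, and evaluating at $s=0$ gives $z_n\in\cK$, hence $g(z_n)\leq 0$ by (H3). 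This is more elementary (it does not use the value function or the representation \eqref{eq:Tmin} at all), and it sidesteps a small technical point in the paper's version: concluding $\vv(z_n,\tau_n)\leq 0$ at the infimum $\tau_n$ itself implicitly uses the continuity of $\vv$ in $t$, whereas both your variants only need \emph{some} $t$ in the nonempty sublevel set. The paper's route has the advantage of living entirely at the level of the PDE/value-function objects that the surrounding discussion manipulates, which is presumably why the authors chose it. Your remark about the degenerate case $\cT(z_n)=0$ is fine and does not affect either argument.
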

\begin{proof}
If $\tau_n:=\cT(z_n)<\infty$ then $\vv(z_n,\tau_n)\leq 0$. On the other hand by definition of $\vv$: $g(z_n)\leq \vv(z_n,\tau_n)$.
This concludes to the desired result.
\end{proof}

In particular, if the trajectory reconstruction satisfies $\cT(z_n)<\infty$, for $n=0,\dots,N$
(and this is expected since $\cT(z_0)<+\infty$ and in view of the dynamic programming principle \eqref{eq:T-dpp}),
then we will have
\be
    g(z_n)\leq 0, \quad \forall n=0,\dots,N,
\ee
and therefore all points $(z_n)_{n=0,\dots,N}$ satisfy the state constraints.
In the time-dependent case, the condition $g(z_n)\leq 0$ must be replaced by $g(z_n,\tau_n)\leq 0$.

\section{Level set functions for different problem data and collision avoidance} \label{sec:4}

In this section we construct Lipschitz level functions to represent obstacle, state constraints and targets satisfying assumptions (H2)-(H3).
The aim is also to illustrate how to obtain explicit analytic formula for some specific obstacles and vehicles.
In some cases, the avoidance of two convex set may not be easily characterized by the negativity of some analytic function. 
We will propose a new way to use simplified analytic level set functions in order to obtain a collision avoidance characterization.  

\subsection{Road configurations}
Let us first describe different simple road geometries depicted in Fig.~\ref{fig:roads} 
which will be used in the next numerical section.
The road will be denoted by $\cK_r$, a subset of $\R^2$.



\begin{figure}
\begin{center}
\begin{tabular}{cc}
\hspace{-0.5cm}\includegraphics[angle=0,width=6cm]{\figs/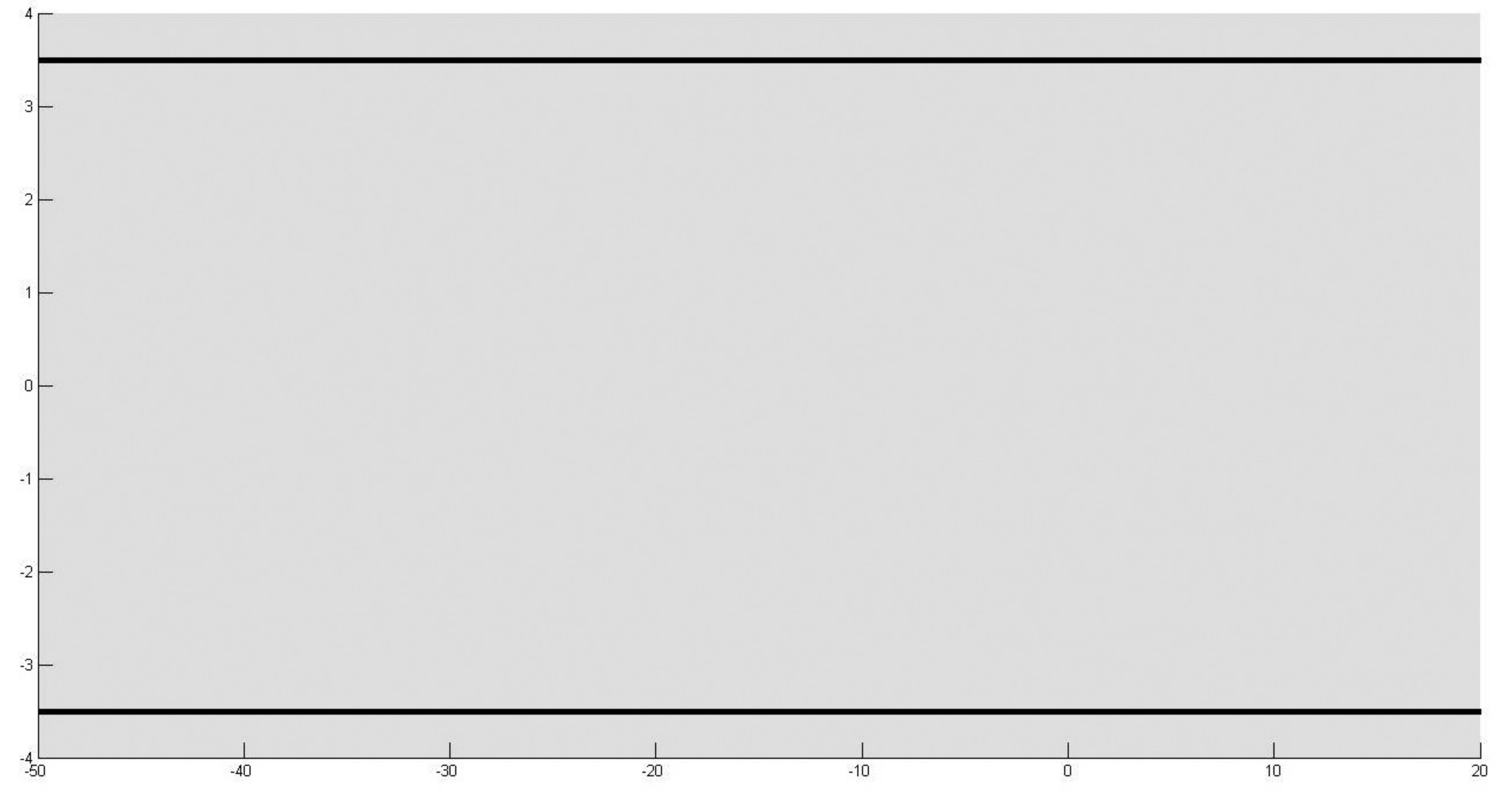} & 
\hspace{-0.5cm}\includegraphics[angle=0,width=6cm]{\figs/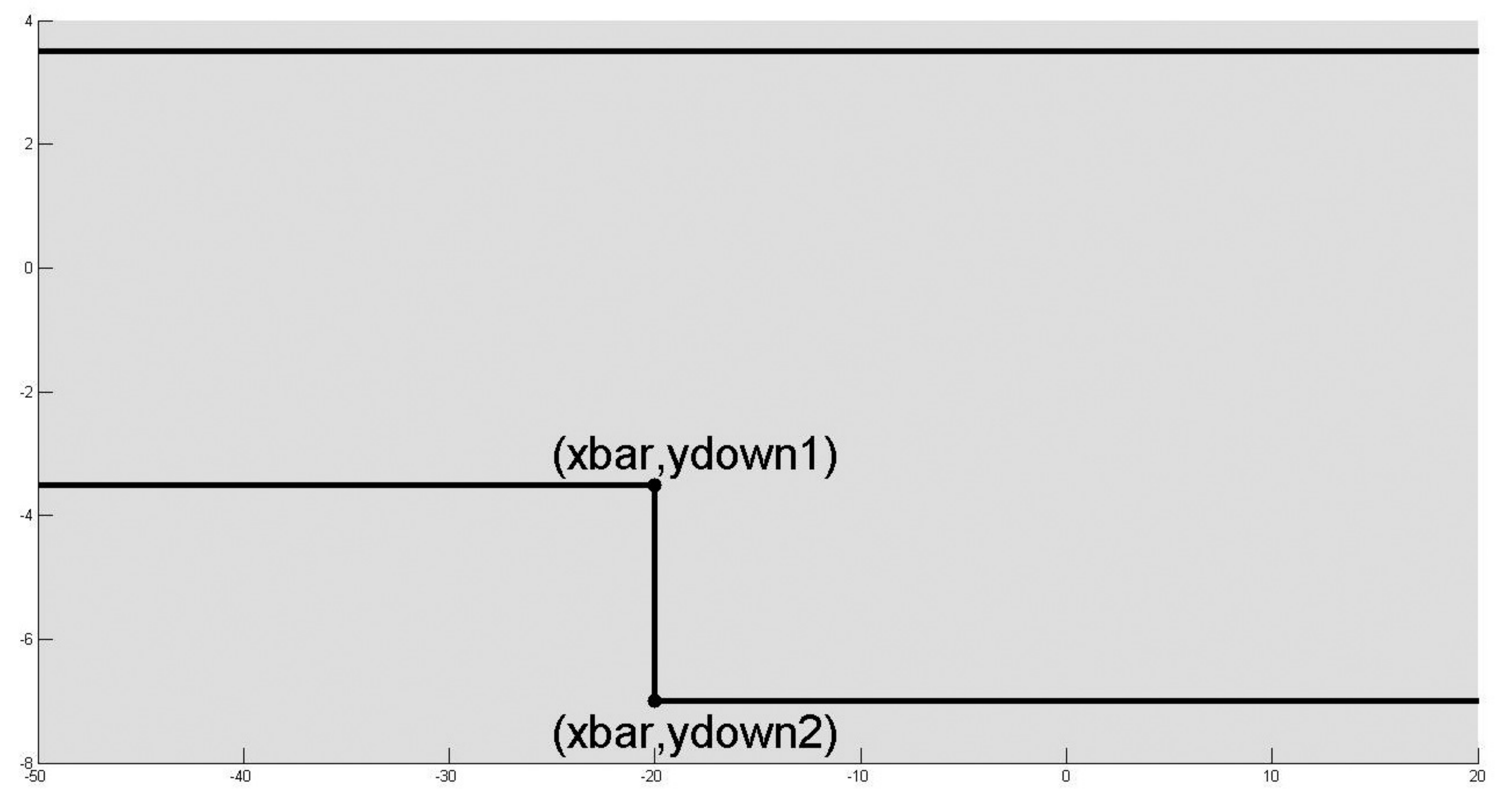}\\
\hspace{-0.5cm}\includegraphics[angle=0,width=6cm]{\figs/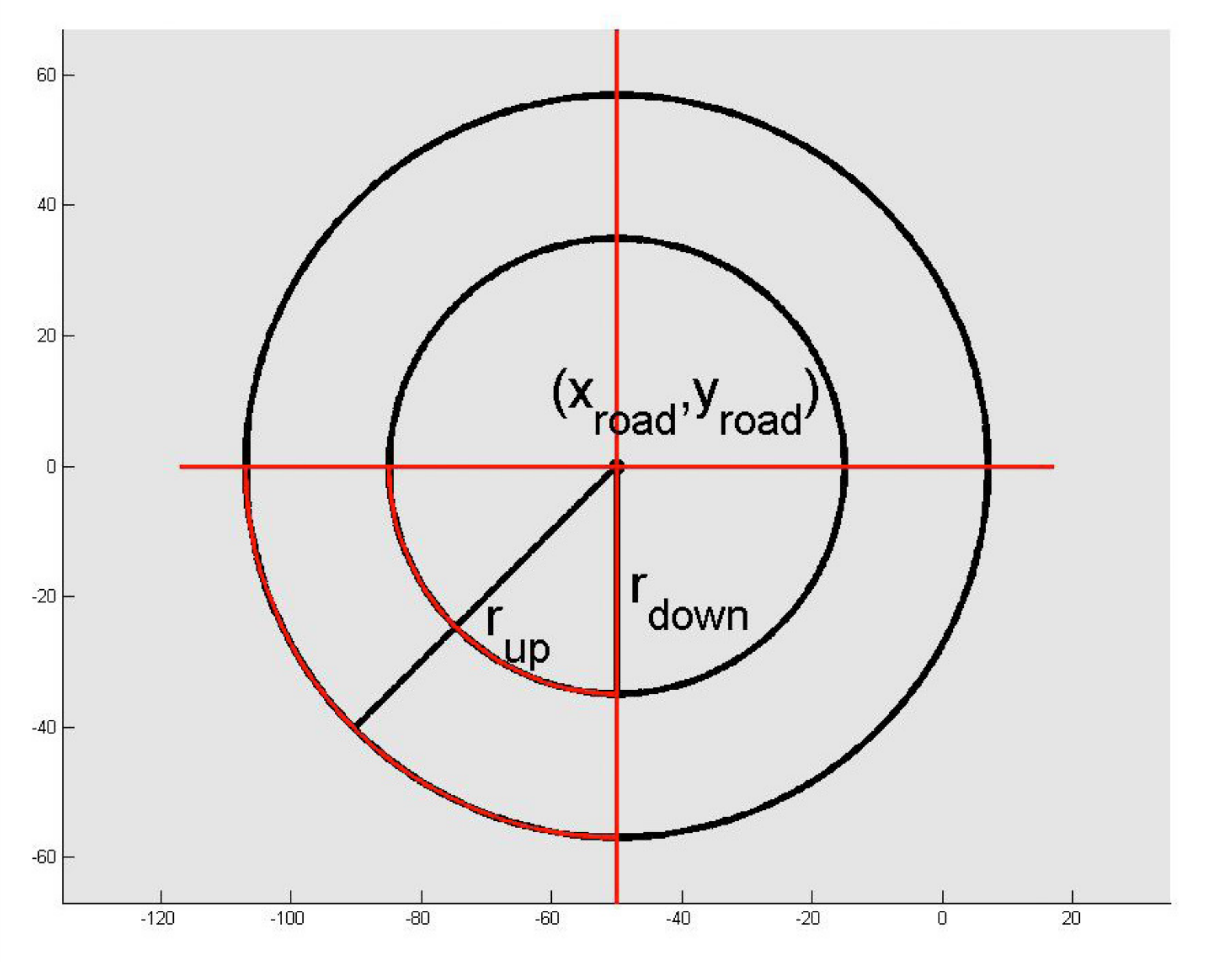} &
\hspace{-0.5cm}\includegraphics[angle=0,width=6cm]{\figs/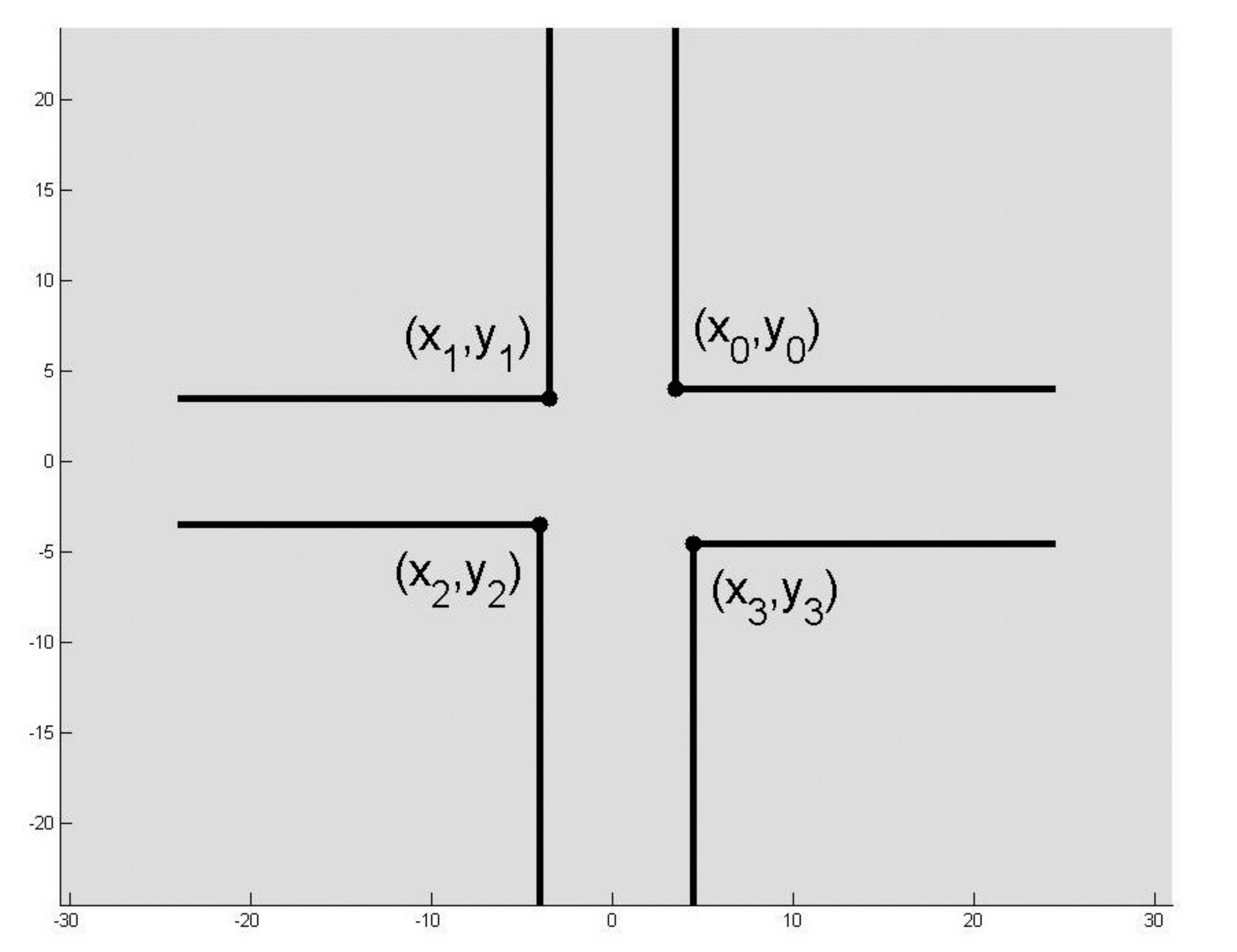}
\end{tabular}
\end{center}
\caption{
\label{fig:roads}
different road geometries: straight (top-left), varying width (top-right), curved (bottom-left), 
  crossing (bottom-right)}
\end{figure}

$\bullet$ 
A straight road with constant width:
\be\label{eq:Kr_straight}
\cK_r=\bigg\{(x,y)\in\R^2 \,|\, y_{down}\leq y \leq y_{up}\bigg\},
\ee
with
$y_{down}=-3.5\mbox{ and } y_{up}=3.5$
(values are typically in meters). A level set function associated to $\cK_r$ can be given by
\be\label{funct_straight}
g(x,y) := \max\bigg(-(y-y_{down}),\ -(y_{up}-y)\bigg).
\ee

$\bullet$ 
A straight road with varying widths, modeled as:
\begin{equation}\label{set_straightlarger}
\cK_r:=\bigg\{(x,y)\in\mathbb R^2\, | \, y_{down}(x)\leq y \leq y_{up}\bigg\},
\end{equation}
where
\begin{equation}\label{set_straightlarger_more}
y_{down}(x)=\left\{\begin{array}{ll}
y_{down_1} & \mbox{ if } x \leq \bar x,\\
y_{down_2} & \mbox{ if } x >  \bar x
  \end{array}
  \right.
\end{equation}
and $y_{up}, y_{down_1}, y_{down_2}, \bar x$ are constants. 
It would be natural to define the level set function of \eqref{set_straightlarger} as in \eqref{funct_straight} 
where instead of $y_{down}$ we use the function $y_{down}(x)$ defined in \eqref{set_straightlarger_more}.
However, \eqref{set_straightlarger_more} leads to a discontinuous function $g$,
which is not convenient for our purposes. In this particular case, 
the following definition is better suited because it is Lipschitz continuous in $(x,y)$:
\be
  g(x,y) & = &  \max \bigg( \min\big(-(y-y_{down_1}), -(x-\bar x)\big),  \nonumber \\
         &   &  \hspace{4cm}  -(y-y_{down_2}), (y-y_{up}) \bigg).  \label{funct_straightlarger}
\ee

$\bullet$ 
The circular curve shown in Figure~\ref{fig:roads} is defined as
\be
  \cK_r & = &  \{ (x,y), \  \mt_{\min}\leq \Theta(x,y)\leq \mt_{max},\   \nonumber\\
   &  & \hspace{3cm}
  r_{down}\leq \rho(x,y) \leq r_{up} \}
  \label{eq:set_curve}
\ee
$\mt_{min}<\mt_{\max}$ are two limiting angles for the road boundaries (with $\mt_{\max} \leq \mt_{\min}+2\pi$),
$(x_{c},y_{c})$ denotes the center of the road circle,
$0<r_{up}<r_{down}$ the radius of the bigger and smaller circle respectively,
\beno
\begin{array}{r@{\,}c@{\,}l@{\,}c@{\,}l@{\,}}
  r   & = & \rho(x,y)   & := & \sqrt{(x-x_c)^2 + (y-y_c)^2} \\
  \mt & = & \Theta(x,y) & := &  \displaystyle \arctan(\frac{y-y_c}{x-x_c}) + k_{x,y}\pi, \ \ \mbox{ with $k_{x,y}\in\Z$ } 
\end{array}
\eeno
denotes a continuous representation on $\cK_r$ such that $x-x_c=r \cos(\mt)$ and $y-y_c=r \sin(\mt)$.
The level set function is defined as 
\be
  g(x,y) & = &  \max \bigg(\rho(x,y)- r_{up},\ -(\rho(x,y)- r_{down}), \nonumber \\
   &  & \hspace{2cm} \Theta(x,y)-\mt_{\max},\ - (\Theta(x,y)-\mt_{\min}) \bigg).
   \label{funct_curve}
\ee



\begin{remark}\label{rem:K1cupK2}
A general and simple rule for constructing Lipschitz continuous level set functions is the following.
Assume that $g_1$ (resp.~$g_2$) are Lipschitz continuous level set functions for the set $\cK_1$ (resp.~$\cK_2$), that is, 
$g_i(x)\leq 0$ $\Leftrightarrow$ $x\in \cK_i$, for $i=1,2$. Then
\be
  \max(g_1(x),g_2(x))\leq 0 & \Leftrightarrow & x\in \cK_1 \cap \cK_2  \label{eq:Kinter}\\
  \min(g_1(x),g_2(x))\leq 0 & \Leftrightarrow & x\in \cK_1 \cup \cK_2. \label{eq:Kunion}
\ee
Hence $\max(g_1,g_2)$ (resp.~$\min(g_1,g_2)$) is Lipschitz continuous and it
can be used as a level set function for $\cK_1\cap \cK_2$ (resp.~$\cK_1\cup\cK_2$).
Then more complex structures can be coded by combining~\eqref{eq:Kinter} and~\eqref{eq:Kunion}.
This is related to well-known techniques for (signed) distance functions
in computational geometry (see e.g.~\cite{fayolle-2008, hart-1996})
\end{remark}

$\bullet$ A crossing with corner points $(x_i,y_i)_{i=0,\dots,3}$: 
Let the upper right part be defined as $\cK_0:=\{ x-x_0\leq 0\ \mbox{or} \ y-y_0\leq 0\}$, 
and similarly, 
$\cK_1:=\{ y-y_1\leq 0 \ \mbox{or}\ -(x-x_1)\leq 0 \}$ (upper left part),
$\cK_2:=\{ -(x-x_2)\leq 0,\ \mbox{or}\ -(y-y_2)\leq 0\}$ (lower left part),
$\cK_3:=\{ -(y-y_3)\leq 0,\ \mbox{or}\ x-x_3\leq 0\}$ (lower right part).

Following Remark~\ref{rem:K1cupK2},
a level set function for $\cK:=\bigcap_{i=0,\dots,3} \cK_i$
can be obtained by 
\be
  g(x,y) & = &  \max\big( \min(x-x_0,y-y_0),\ \min(y-y_1,-(x-x_1)), \nonumber \\
   &   & \hspace{0.5cm} \min(-(x-x_2),-(y-y_2)),\ \min(-(y-y_3),x-x_3) \big), \nonumber \\
   &   & 
  \label{funct_crossing}
\ee

More general ways to construct level set functions for roads delimited by polygonal lines could be obtained following 
similar ideas.
Also let us mention that the modeling of the road boundaries via piecewise cubic polynomials or B-splines can be obtained 
as in~\cite{Ger05}.

\if{
\em
A more general way to construct level set functions for roads delimited by polygonal lines is 
the following.
\begin{center}
\includegraphics[angle=0,width=7cm]{\figs/generalroad-eps-converted-to.pdf}
 \end{center}
We characterize the set describing the road as
\begin{equation}\label{set_generalroad}
\mathcal K_r=\left\{(x,y)\in\mathbb{R}^2\, | \, p_{1}(x,y)\leq 0 \mbox{ and } p_{2}(x,y)\geq0\right\}
\end{equation}
For $j=1,2$, $p_{j}:\mathbb R^2\rightarrow \mathbb R$ are interpolation functions 
of the points $(x_{ji},y_{ji})$, $i=0,\dots,n\}$ detected by the sensor 
as points belonging to the upper ($j=1$) and lower ($j=2$) bounds of the road. We write
\begin{equation}
p_{j}=q_{ji}(x,y) \quad \forall x\in[x_{ji}, x_{ji+1}], y\in[y_{ji}, y_{ji+1}] 
\end{equation}
where $q_{ji}:\mathbb R^2 \rightarrow R$ are the polynomials interpolating the pair of
knots $(x_{ji}, y_{ji})$ and $(x_{ji+1}, y_{ji+1})$. The level set function associated to set \eqref{set_generalroad} is:
\begin{equation}\label{funct_generalroad}
\begin{array}{rll}
g(x,y)=&\multicolumn{2}{l}{\min\limits_{j=1,2} g_{j}(x,y)}\\
g_{j}(x,y)=&\max\limits_{i=0,\dots,n-1}\bigg(
\min\big( &
(x-x_{ji+1}\leq0),\\
&&-(x-x_{ji}\leq0),\\[0.2cm]
&&(y-y_{ji+1}\leq0),\\[0.2cm]
&&-(y-y_{ji}\leq0),\\
&&(-1^{j+1}q_{ji}(x,y)\leq0)
\quad\big)
\quad\bigg)
\end{array}
\end{equation}

\begin{remark}
In the particular case where the functions $p_j$ are piecewise linear and convex, 
the level set function of \eqref{set_generalroad} is
\begin{equation}\label{funct_generalroad_convex}
\begin{array}{rl}
      g:= & \max_j g_{j} \\
  g_{j}:= & \max_{i=0,\dots,n-1} \left(-1^{j+1}q_{ji}\leq0\right).
\end{array}
\end{equation}
\end{remark}

Another option is the modeling of the road via piecewise defined cubic polynomials
or B-splines as in~\cite{Ger05}.
}\fi

\subsection{Obstacles and corresponding level set functions}
Next, $k\geq 1$  additional obstacles 
are considered (obstacles which are different from the road).
There are modeled  as disks or rectangles that the car has to avoid (the car being itself
modeled in the form of a disk or a rectangle), therefore defining another type of state constraint.


\begin{figure}
\begin{center}
\includegraphics[angle=0,width=7cm]{\figs/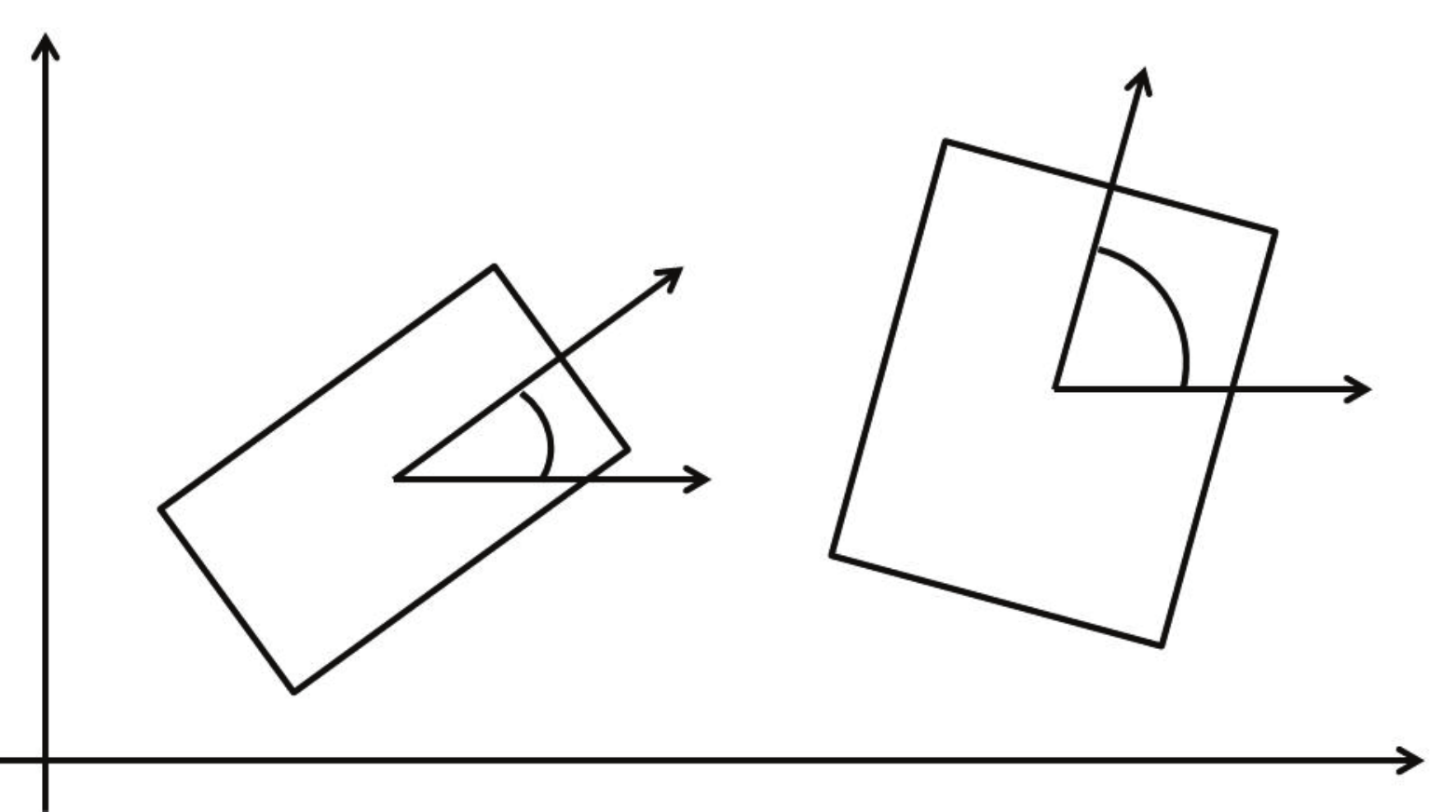}
\setlength{\unitlength}{1cm}
\begin{picture}(0,0)(0,0)
\put(-2,2.3){$\psi_{i}$}
\put(-2.5,1.5){$\mathcal O_i$}
\put(-4.9,1.7){$\psi$}
\put(-5.8,1.1){$\mathcal V$}
  \put(-1,-0.1){$x$}
  \put(-7.4,3.1){$y$}
\end{picture}
\caption{Vehicle and obstacles
\label{fig:veh-obs}
}
\end{center}
\end{figure}

Let $X_i=(x_i,y_i) $ denote the center of obstacle $i$, which may depend on the time $s$.
Let $X=(x,y)$ denote the center of gravity of the vehicle.

\medskip

\paragraph{\bf Circular obstacles.} 
We first consider the simpler case of {circular obstacles} and vehicle, where 
the vehicle is approximated by a closed ball $B(X,r)$ for a given radius $r>0$,
and the obstacles by closed balls $B(X_i(s),r_i)$ centered at $X_i(s)$ and with given fixed radius $r_i>0$.
Then it holds $B(X_i(s),r_i)\cap B(X,r) = \emptyset \equivalent \|X-X_i(s)\|_2 > r+r_i$,
hence there is no collision at time $s$ if 
$B(X,r)\cap \bigcup_{1\leq i\leq k} B(X_i(s),r_i) = \emptyset$, which amounts to saying 
\be \label{eq:gcirc}
   g(z,s):= \min_{1\leq i\leq k}  -(\|X-X_i(s)\|_2-r-r_i) <0,
\ee
where $z=(x,y,\psi,v)$.

\medskip

\paragraph{\bf Rectangular obstacles.} 

We now turn on the more realistic case of {rectangular obstacles} and vehicle (see Fig.~\ref{fig:veh-obs}).

Let us first remark that the Gilbert-Johnson-Keehrti distance algorithm \cite{Gilbert1988} can compute
  the signed distance function for two convex obstacles in two dimensions and therefore can be used to detect collisions.
However, since the obstacle function will have to be computed on each grid point of the discretization method,
in order to be more efficient, we will look for a more straightforward analytic way to detect collision avoidance.

We assume the vehicle $\cV$ is a rectangle centered at $X$ and with half lengths $\ell=(\ell_x,\ell_y)^T$ 
rotated by the angle $\psi$, i.e.,
%
\begin{align*}
   \cV & = X + R_\psi ([-\ell_x,\ell_x] \times [-\ell_y,\ell_y])
\end{align*}
where $R_\psi$ denotes the rotation matrix
$R_\psi:=\MAT{cc}{\cos(\psi) & -\sin(\psi)\\ \sin(\psi) & \cos(\psi)}$.
We assume that each obstacle $\cO_i$ is also a rectangle with center $X^i=X^i(s)$ and
half lengths $\ell^i=(\ell^i_x,\ell^i_y)^T$.
Then the four corners $(X_j)_{1\leq j\leq 4}$ of the vehicle with state $z=(x,y,\psi,v)$ are determined by 
\beno
  X_j = X + R_\psi T_j \ell, \quad 
    T_j=\MAT{cc}{ (-1)^{j-1} & 0 \\ 0 & (-1)^{\lfloor \frac{j-1}{2} \rfloor} }, \quad 1\leq j\leq 4,
\eeno
where $\lfloor x \rfloor$ denotes the integer part of a real $x$,
and in the same way the four corners $(X_j^i)_{1\leq j\leq 4}$
of obstacle $\cO_i$ (determined by its center $X^i(s)$ and orientation $\psi_i(s)$) are given by
\beno
  X^{i}_j := X^i + R_{\psi_i} T_j \ell^i, \quad 1\leq j\leq 4.
\eeno

Furthermore, for a given $X=(x,y)^T$, let
$$
  d_\ell(X):= \max(\ell_x-|x|,\ell_y-|y|).
$$

The function $d_\ell$ is a level set function for the avoidance of $[-\ell_x,\ell_x]\times [-\ell_y,\ell_y]$, since 
$d_\ell(X)<0$ $\equivalent$ $X\notin [-\ell_x,\ell_x]\times [-\ell_y,\ell_y]$.
For an arbitrary point $Y \in \R^2$, the following function
$$ d_\cV(Y):= d_\ell( R_{-\psi}(Y-X) )$$
is a level set function for the avoidance of the vehicle, 
in the sense that $d_\cV(Y)<0$ $\equivalent$ $Y\notin V$. 

In the same way, 
$$ d_{\cO_i}(Y):= d_{\ell^i}( R_{-\psi_i}(Y-X^i) )$$
satisfies $d_{\cO_i}(Y)<0$ $\equivalent$ $Y\notin \cO_i$.

Now we consider the following function, for $z=(x,y,\psi,v)\in \R^4$ and $s \geq 0$:
\be \label{eq:grect}
  g(z,s):= \max_{1\leq i\leq k} \bigg(
    \max_{j=1,\dots,4} d_{\cV(z)}(X^i_j(s)),\ 
    \max_{j=1,\dots,4} d_{\cO_i(s)}(X_j(z)) 
  \bigg).
\ee
If the \MODIF{positions of the obstacles} do not depend of time, we can define $g(z)$ in the same way without time dependency.
Here we have denoted $\cV(z)$ and $X_j(z)$ for the center and the corners of the vehicle to
stress the dependance on the state variable $z$,
and $X^i_j(s)$ to emphasize the time dependency of the obstacle corners.

The function $g$ will serve as a level set function for obstacle avoidance.
Presently, from the definition of the $g$ function, it holds:

\begin{lemma} \label{lem:condition1}
The function $g$ is Lipschitz continuous and 
\be\label{eq:condition1}
  g(z,s)<0  \quad \equivalent \quad 
  \forall i,j:\ X_j(z)\notin \cO_i(s)\ \mbox{and}\  \forall i,j:\ X^i_j(s) \notin \cV(z)
\ee
\end{lemma}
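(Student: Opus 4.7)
The plan is to split the lemma into its two assertions and handle each via the building blocks already assembled above the statement, namely the level set characterizations $d_\ell(X)<0 \iff X \notin [-\ell_x,\ell_x]\times[-\ell_y,\ell_y]$, $d_{\cV}(Y)<0 \iff Y \notin \cV$, and $d_{\cO_i}(Y)<0 \iff Y \notin \cO_i$.

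For the equivalence \eqref{eq:condition1}, the argument is purely a manipulation of max. First I would observe that, by the very definition of $g$, the strict inequality $g(z,s)<0$ is equivalent to saying that every term inside the outer max is strictly negative, which by the same argument applied to the inner maxes is equivalent to
\[
  d_{\cV(z)}(X^i_j(s)) < 0 \quad \text{and} \quad d_{\cO_i(s)}(X_j(z)) < 0 \qquad \text{for all } i=1,\dots,k,\ j=1,\dots,4.
\]
Invoking the characterization of $d_{\cV}$ and $d_{\cO_i}$ reproduces exactly the right-hand side of \eqref{eq:condition1}. There is no asymmetry or hidden case analysis here, so this direction is essentially a rewriting.

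For Lipschitz continuity I would build up $g$ from its components. The basic function $d_\ell(X)=\max(\ell_x-|x|,\ell_y-|y|)$ is $1$-Lipschitz in $X$ as a max of two compositions of the $1$-Lipschitz absolute value with affine maps. The map $(Y,X,\psi)\mapsto R_{-\psi}(Y-X)$ is smooth (and Lipschitz on any bounded set) since the entries of $R_{-\psi}$ are $\cos$ and $\sin$, so $d_{\cV(z)}(Y)=d_\ell(R_{-\psi}(Y-X))$ is locally Lipschitz in $(z,Y)$; the same applies to $d_{\cO_i(s)}$, using that $X^i(s)$ and $\psi_i(s)$ are Lipschitz in $s$ (as is tacit in the scenario setup). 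Next, the corner maps $z\mapsto X_j(z) = X + R_\psi T_j \ell$ and $s\mapsto X^i_j(s) = X^i(s)+R_{\psi_i(s)}T_j\ell^i$ are Lipschitz, so each of the compositions $d_{\cV(z)}(X^i_j(s))$ and $d_{\cO_i(s)}(X_j(z))$ is Lipschitz in $(z,s)$. Finally, a finite max of Lipschitz functions is Lipschitz (with Lipschitz constant bounded by the maximum of the individual constants), which yields the claim for $g$.

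The only mildly delicate point is keeping track of the fact that Lipschitz continuity here is on a bounded region of state space and time (since rotations bring in trigonometric nonlinearities); outside any such bounded set one gets only local Lipschitz continuity, which is in any event what is needed to apply the HJB framework of Section~3. Other than that, the proof is a matter of assembling the pieces already set up in the definition of $g$, so I expect no real obstacle beyond bookkeeping.
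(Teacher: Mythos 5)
Your proposal is correct and matches what the paper intends: the paper states this lemma without a written proof, treating it as immediate from the definition \eqref{eq:grect}, and your unfolding of the nested maxima together with the characterizations of $d_{\cV}$ and $d_{\cO_i}$ is exactly that implicit argument. Your remark that the Lipschitz continuity is really only local (on bounded sets) because the rotation introduces a factor of $\|Y-X\|$ in the $\psi$-dependence is a fair and slightly more careful observation than the paper makes, and it is harmless since (H3) is only needed on the bounded computational domain.
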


However, we aim to characterize the fact that the obstacle and vehicle are disjoint, i.e.,
\be\label{eq:condition2}
  \cV(z(s))\ \bigcap\ \bigg(\displaystyle \bigcup_{1\leq i\leq k} \cO_i(s)\bigg) = \emptyset.
\ee
In general, the condition $g(z(t_n),t_n)<0$ at a given time $t_n$, i.e., condition \eqref{eq:condition1},
is not sufficient to ensure that \eqref{eq:condition2} holds at time $t_n$,
as shown by the counter-example illustrated in Fig.~\ref{fig:counterex}.

\if{
\begin{figure}
\begin{center}
\includegraphics[angle=0,width=7cm]{\figs/paper2-eps-converted-to.pdf}
\setlength{\unitlength}{1cm}
\begin{picture}(0,0)(0,0)
\put(-3.5,1.1){$\mathcal O_i$}
\put(-5.3,1.6){$\mathcal V$}
  \put(-1,-0.1){$x$}
  \put(-7.4,3.1){$y$}
  \put(-8,0){$(x,y)$}
\end{picture}
\caption{\label{fig:counterex} 
vehicle and obstacle are not necessarily disjoint even if \eqref{eq:condition1} holds}
\end{center}
\end{figure}
}\fi

\begin{figure}
\begin{center}
\begin{picture}(0,0)%
\includegraphics{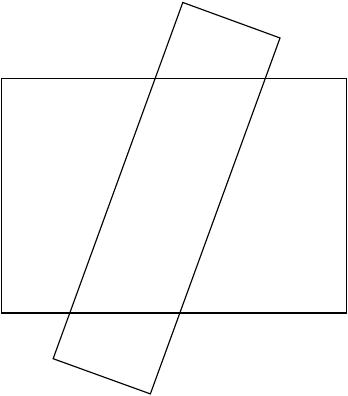}%
\end{picture}%
\setlength{\unitlength}{2901sp}%
\begingroup\makeatletter\ifx\SetFigFont\undefined%
\gdef\SetFigFont#1#2#3#4#5{%
  \reset@font\fontsize{#1}{#2pt}%
  \fontfamily{#3}\fontseries{#4}\fontshape{#5}%
  \selectfont}%
\fi\endgroup%
\begin{picture}(2274,2581)(1488,-1864)
\put(2341,-1636){\makebox(0,0)[lb]{\smash{{\SetFigFont{11}{13.2}{\rmdefault}{\mddefault}{\updefault}{\color[rgb]{0,0,0}$\mathcal O$}%
}}}}
\put(1576,-16){\makebox(0,0)[lb]{\smash{{\SetFigFont{11}{13.2}{\rmdefault}{\mddefault}{\updefault}{\color[rgb]{0,0,0}$\mathcal V$}%
}}}}
\end{picture}%
\caption{\label{fig:counterex} 
Vehicle and obstacle are not necessarily disjoint even if the condition \eqref{eq:condition1} holds, 
i.e., even if each corner point of $\mathcal V$ (resp. $\cO$) lies outside $\cO$ (resp. $\cV$).
}
\end{center}
\end{figure}

\medskip

In order to use condition \eqref{eq:condition1} as a sufficient condition,
we need furthermore the following result.

\begin{lemma}\label{lem:condition2}
We consider that the vehicle as well as all obstacles $\cO_i$ are rectangles.
Let $d$ be the minimal car and obstacle half-lengths 
\be\label{eq:bard}
  \underline{d}:=\min(\ell_x,\ell_y,\min_{1\leq i\leq k}(\min(\ell^i_x,\ell^i_y))) 
\ee
and let $\bar v$ be an upper bound for all relative velocities between the car corners and the obstacle corners 
involved in the computations
\be
\label{eq:barv}
  \overline{v}:= \max_{s\in[0,T]} \max_{1\leq j,j'\leq 4}\max_{1\leq i\leq k}
     \|\dot X_j(s)-\dot X^k_{j'}(s)\|.
\ee

Assume that 
\begin{itemize}
\item[$(i)$]  at time $s=t_n$, the vehicle is disjoint from the obstacles (i.e., \eqref{eq:condition2} holds);
\item[$(ii)$] at time $s=t_{n+1}$ condition \eqref{eq:condition1} is fulfilled (i.e., \mbox{$g(z(t_{n+1}),t_{n+1})<0)$};
\item[$(iii)$] the time step satisfies 
\be\label{eq:dt-lowerbound}
  \Dt_n:=t_{n+1}-t_n\ <\ \underline{d}/\overline{v}.
\ee
\end{itemize}
Then at time $s=t_{n+1}$ the vehicle is also disjoint from the obstacles (i.e., \eqref{eq:condition2} holds with $s=t_{n+1}$).
\end{lemma}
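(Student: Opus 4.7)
The plan is to argue by contradiction: assume that $\mathcal{V}(z(t_{n+1}))$ and some obstacle $\mathcal{O}_i(t_{n+1})$ are not disjoint. Together with~\eqref{eq:condition1} at $t_{n+1}$, this forces a crossing (``$+$'') configuration with no vertex of either rectangle lying inside the other (cf.\ Fig.~\ref{fig:counterex}). By~(i) the rectangles are disjoint at $t_n$ and, since the corner trajectories $X_j(\cdot)$ and $X^i_{j'}(\cdot)$ are continuous, the signed distance between the two rectangles is continuous in $s$ and changes sign on $[t_n,t_{n+1}]$. The intermediate value theorem yields a first touching time $\tau^{*}\in(t_n,t_{n+1}]$ at which the rectangles meet tangentially on the boundary; for two convex polygons this contact is vertex-on-edge (or vertex-on-vertex as a limiting case). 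WLOG a vertex $X_{j^{*}}$ of $\mathcal{V}$ lies on an edge $e=[X^i_{j_1},X^i_{j_2}]$ of $\mathcal{O}_i$ at $\tau^{*}$, and I write $X_{j^{*}}(\tau^{*})=\lambda X^i_{j_1}(\tau^{*})+(1-\lambda)X^i_{j_2}(\tau^{*})$ for some $\lambda\in[0,1]$.

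The central tool is a Lagrangian (material-point) tracking argument. Define $P(s):=\lambda X^i_{j_1}(s)+(1-\lambda)X^i_{j_2}(s)$, which rides on the edge $e(s)\subset\mathcal{O}_i(s)$ for every $s$ by rigid-body kinematics. The convex-combination form of the bound~\eqref{eq:barv} gives
$$
  \|\dot X_{j^{*}}(s)-\dot P(s)\|\leq\lambda\|\dot X_{j^{*}}(s)-\dot X^i_{j_1}(s)\|+(1-\lambda)\|\dot X_{j^{*}}(s)-\dot X^i_{j_2}(s)\|\leq\overline{v}.
$$
Integrating on $[\tau^{*},t_{n+1}]$ and using $X_{j^{*}}(\tau^{*})=P(\tau^{*})$ yields $\|X_{j^{*}}(t_{n+1})-P(t_{n+1})\|\leq\overline{v}(t_{n+1}-\tau^{*})\leq\overline{v}\Delta t_n<\underline{d}$, so $X_{j^{*}}(t_{n+1})$ remains within distance $\underline{d}$ of the material edge-point $P(t_{n+1})\in\mathcal{O}_i(t_{n+1})$. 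The same computation with the opposite-edge Lagrangian point $P'(s):=\lambda X^i_{j_3}(s)+(1-\lambda)X^i_{j_4}(s)$ gives $\|X_{j^{*}}(t_{n+1})-P'(t_{n+1})\|\geq\|P(\tau^{*})-P'(\tau^{*})\|-\overline{v}\Delta t_n\geq 2\ell^i_{\star}-\underline{d}\geq\underline{d}$, using that $\|P(\tau^{*})-P'(\tau^{*})\|$ equals the transverse width $2\ell^i_{\star}\geq 2\underline{d}$ of $\mathcal{O}_i$. An analogous computation for each of the other vertices $X_{j''}$ of $\mathcal{V}$ bounds the world-frame drift of $X_{j''}-P$ by strictly less than $\underline{d}$ on $[\tau^{*},t_{n+1}]$.

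To close the argument, I would derive the geometric contradiction: for the ``$+$'' pattern at $t_{n+1}$ to coexist with~\eqref{eq:condition1}, some vertex of $\mathcal{V}$ must sit on the far side of $\mathcal{O}_i(t_{n+1})$ beyond the opposite edge $e'(t_{n+1})$, requiring a transverse displacement of at least $2\ell^i_{\star}\geq 2\underline{d}$ relative to its position at $\tau^{*}$, where every vertex of $\mathcal{V}$ lies on the outside side of the support line through $e$. Applied vertex by vertex against the pair of parallel-edge material points $P$, $P'$, the Lagrangian bounds above control this transverse excursion by less than $\underline{d}$, which is incompatible with the required $2\underline{d}$ jump. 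This contradicts the existence of the crossing configuration, proving disjointness at $t_{n+1}$.

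The hard part will be ensuring that the transverse-displacement bound is honest in the face of simultaneous translation and rotation of both rigid bodies; a naive signed-distance rate computation picks up rotational terms of the form $|\dot\psi_i|\cdot r$ that are not directly controlled by $\overline{v}$. The Lagrangian material-point device is exactly what bypasses this cleanly: by writing reference points as convex combinations of obstacle corners, the corner-to-corner bound~\eqref{eq:barv} transfers, via the triangle inequality, to bounds on the drift of every vehicle vertex relative to body-fixed points, with the entire rigid-body rotation absorbed into the motion of those reference points.
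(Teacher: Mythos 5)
Your setup (contradiction, first-contact time via the intermediate value theorem, and the Lagrangian material-point device $P(s)=\lambda X^i_{j_1}(s)+(1-\lambda)X^i_{j_2}(s)$ to transfer the corner-to-corner bound \eqref{eq:barv} to edge points while absorbing the rotation) is sound and is a genuinely different, and in some ways cleaner, way to obtain the quantitative bound $\overline{v}\,\Dt_n$ on relative displacements. The gap is in the geometric endgame, which you leave conditional (``I would derive the geometric contradiction'') and which, as formulated, rests on a false claim. You assert that an overlap at $t_{n+1}$ compatible with \eqref{eq:condition1} forces some vertex of $\cV$ to lie beyond the edge $e'$ of $\cO_i$ \emph{opposite to the first-contact edge} $e$. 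This is not true: the overlap at $t_{n+1}$ may be a crossing transverse to the \emph{other} pair of parallel edges of $\cO_i$ (e.g.\ first contact of a vertex of $\cV$ with the bottom edge of $\cO_i$, followed by $\cV$ sweeping across $\cO_i$ from left to right), or it may be $\cO_i$ that spans $\cV$ rather than the reverse, or one of the ``scissors'' configurations in which each of two parallel edges of $\cO_i$ meets two \emph{neighboring} edges of $\cV$. In none of these cases does any vertex of $\cV$ need a transverse excursion of $2\ell^i_\star$ relative to the support line of $e$, so your displacement bound of $\underline{d}$ per vertex does not produce a contradiction. (Your auxiliary estimate $\|X_{j^*}(t_{n+1})-P'(t_{n+1})\|\geq\underline{d}$ only says the contact vertex stays away from one material point of the far edge, which does not constrain which side of that edge it is on.) A further loose end: the first contact may be a vertex of $\cO_i$ on an edge of $\cV$, and your subsequent tracking is not symmetric in the two bodies.

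The paper closes the argument differently: it uses the same kind of travel-distance bound (over $[t_n,t_{n+1}]$ rather than $[\tau^*,t_{n+1}]$) only to show that the inner core segments $I_\cV=\{x:\ d(x,\R^2\setminus\cV)=\ell_\cV\}$ and $I_\cO$ remain outside the other body at $t_{n+1}$, and then derives a purely \emph{static} contradiction at $t_{n+1}$: a case analysis of how an edge of $\cO$ with endpoints outside $\cV$ can cross $\cV$ shows that, with $\cO\cap I_\cV=\emptyset$ and $\cV\cap I_\cO=\emptyset$, one is forced to $2\ell_\cO\leq\sqrt{2}\,\ell_\cV$, contradicting $\ell_\cV\leq\ell_\cO$. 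To complete your proof you would need to supply an analogous exhaustive classification of the admissible overlap configurations at $t_{n+1}$ and show that each one forces a displacement of at least $\underline{d}$ of some tracked point relative to the other body's material frame; the kinematic bounds alone do not do this.
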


\begin{remark}
An upper bound of \eqref{eq:barv}, for $\overline{v}$,
can be obtained on a given computational bounded domain $\Omega$ and for given obstacle parameters.
\end{remark}

\proof{of Lemma~\ref{lem:condition2}.}
{
Assume, to the contrary, that an intersection occurs at time $t_{n+1}$, 
i.e., \eqref{eq:condition2} is not satisfied for a given obstacle.
while \eqref{eq:condition1} is fulfilled at time $t_{n+1}$ due to assumption~(ii). 
Let us show, in that case, that one point $M(s)$ of the obstacle (or, resp., the vehicle) will run through the vehicle
(or, resp., the obstacle) for a distance of at least $d\,\geq\, \underline{d}$.
This will imply in particular that 
\be\label{eq:dt-lowerbound2}
  \overline{v}\, \Dt_n\, \geq \, \int_{t_n}^{t_{n+1}} v(s) ds \, {\geq} \, \underline{d}  
\ee
(where $v(s)$ denotes the relative velocity of the considered point $M(s)$),
which will contradict \eqref{eq:dt-lowerbound}.

Precisely, assume that one obstacle (say $\cO:=\cO^i$) intersects the vehicle $\cV$ at time $t_{n+1}$.
Let $\ell_{\cV}:=\min(\ell_x,\ell_y)$ denote the minimum half-length of the edges of the vehicle, as well as 
$\ell_{\cO}:=\min(\ell_x^i,\ell_y^i)$.

Because $\cO$ and $\cV$ will play a symmetric role from now on, we can also assume that $\ell_\cO \geq \ell_\cV$ (otherwise we may exchange $\cO$ and $\cV$ 
in the forthcoming argument).

Let $I_{\cV}$ denote the set of points $x \in \cV$ such that $d(x,\R^2 {\setminus} \cV)$ 
is maximal. We notice that $I_{\cV}$ is a segment 
and that 
$$
  I_{\cV}:=\{x\in \R^2 \,\vert\, d(x,\R^2 {\setminus} \cV)=\ell_{\cV}\}.
$$
($I_\cV$ may be reduced to a point for a squared vehicle.)
This set is illustrated in Figure~\ref{fig:setIV}.

\begin{figure}
\begin{center}
\begin{picture}(0,0)%
\includegraphics{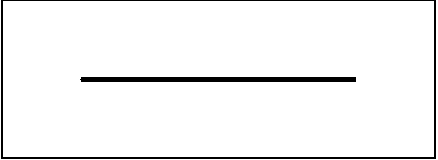}%
\end{picture}%
\setlength{\unitlength}{1657sp}%
\begingroup\makeatletter\ifx\SetFigFont\undefined%
\gdef\SetFigFont#1#2#3#4#5{%
  \reset@font\fontsize{#1}{#2pt}%
  \fontfamily{#3}\fontseries{#4}\fontshape{#5}%
  \selectfont}%
\fi\endgroup%
\begin{picture}(4994,1844)(879,-1433)
\put(1126,-1186){\makebox(0,0)[lb]{\smash{{\SetFigFont{6}{7.2}{\rmdefault}{\mddefault}{\updefault}{\color[rgb]{0,0,0}$\cV$}%
}}}}
\put(1801,-286){\makebox(0,0)[lb]{\smash{{\SetFigFont{6}{7.2}{\rmdefault}{\mddefault}{\updefault}{\color[rgb]{0,0,0}$I_{\cV}$}%
}}}}
\end{picture}%
\end{center}
\caption{\label{fig:setIV} The interval $I_\cV$ associated to the vehicle $\cV$.}
\end{figure}


In the same way for the given obstacle $\cO=\cO^i$, let
$$ I_{\cO}:=\{x\in \R^2 {\,\vert\,} d(x,\R^2 {\setminus} \cO)=\ell_{\cO}\}.
   \rule[-1ex]{0ex}{1ex} 
$$

Let us first show that (at time $t_{n+1}$): 
$$  
  \cO\cap I_\cV = \emptyset.
$$ 
If, to the contrary, $\cO\cap I_\cV\neq\emptyset$ (at time $t_{n+1}$), then 
the point $M(t_{n+1})\in \cO\cap I_\cV$, attached to $\cO$,
has run through the 
vehicle for a distance of at least $\ell_\cV$ between $t_n$ and $t_{n+1}$ (since at time $t_n$, by assumption $(i)$, 
$M(t_n)\in \cO(t_n)$ was outside of $\cV(z(t_n))$).
By using the fact that $\ell_\cV\geq \underline{d}$, and \eqref{eq:dt-lowerbound2}, we 
arrive to the contradiction
$\overline{v}\Dt \geq \underline{d}$.

In the same way, we can conclude to
$$
  \cV\cap I_\cO = \emptyset.
$$
%


Let us now consider an edge of the obstacle (say an edge $[A;B]$ of $\cO$) that overlaps the vehicle $\cV$, but with $A,B \notin \cV$,
as illustrated in Figs.~\ref{fig:setIV2-1}-\ref{fig:setIV2-23}.
Furthermore we notice that $[A;B]\cap I_\cV\subset \cO\cap I_\cV=\emptyset$, hence $[A;B]\cap I_\cV=\emptyset$.

Notice that for any edge of $\cO$ that overlaps the vehicle,
and since the corner points are outside of $\cV$, 
there can be only two generic situations: 
either the edge intersects two parallel edges of the vehicle, or the edge intersects two neighboring edges of the vehicle.
(The particular cases when one edge overlaps the vehicle and intersects another edge, or overlaps one corner point cannot happen,
since each corner point of each considered object is assumed to be strictly outside the other object.)

Also, it is not possible that $I_\cV$ lies in between two parallel edges,
say between the edges $(A,B)$ and $(A',B')$ as in Fig.~\ref{fig:setIV2-imp-0}, since otherwise the segment $I_\cV$ would be fully 
included in the object $\cO=(ABB'A')$ and this case has already been excluded. 
Some forbidden situations are also depicted in~Fig.~\ref{fig:setIV2-imp-0}--\ref{fig:setIV2-imp-2}. 

\begin{figure}[!hbtp]
\begin{center}

\vspace{-0.5cm}
\end{center}
\caption{\label{fig:setIV2-imp-0} Forbidden situation: $\cO\cap I_\cV\neq\emptyset$}
\end{figure}

\begin{figure}[!hbtp]
\begin{center}
\begin{tabular}{cc}
\begin{picture}(0,0)%
\includegraphics{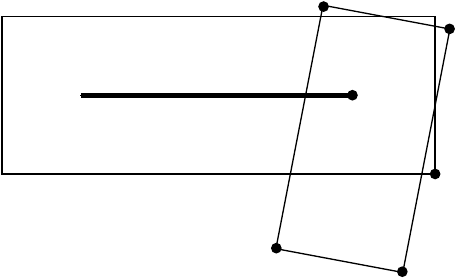}%
\end{picture}%
\setlength{\unitlength}{1657sp}%
\begingroup\makeatletter\ifx\SetFigFont\undefined%
\gdef\SetFigFont#1#2#3#4#5{%
  \reset@font\fontsize{#1}{#2pt}%
  \fontfamily{#3}\fontseries{#4}\fontshape{#5}%
  \selectfont}%
\fi\endgroup%
\begin{picture}(5351,3413)(879,-2596)
\put(4216,-2191){\makebox(0,0)[lb]{\smash{{\SetFigFont{6}{7.2}{\rmdefault}{\mddefault}{\updefault}{\color[rgb]{0,0,0}A}%
}}}}
\put(1801,-286){\makebox(0,0)[lb]{\smash{{\SetFigFont{6}{7.2}{\rmdefault}{\mddefault}{\updefault}{\color[rgb]{0,0,0}$I_{\cV}$}%
}}}}
\put(1036,-1276){\makebox(0,0)[lb]{\smash{{\SetFigFont{6}{7.2}{\rmdefault}{\mddefault}{\updefault}{\color[rgb]{0,0,0}$\cV$}%
}}}}
\put(4771,-871){\makebox(0,0)[lb]{\smash{{\SetFigFont{6}{7.2}{\rmdefault}{\mddefault}{\updefault}{\color[rgb]{0,0,0}P}%
}}}}
\put(6031,-1501){\makebox(0,0)[lb]{\smash{{\SetFigFont{6}{7.2}{\rmdefault}{\mddefault}{\updefault}{\color[rgb]{0,0,0}C}%
}}}}
\put(4621,607){\makebox(0,0)[lb]{\smash{{\SetFigFont{6}{7.2}{\rmdefault}{\mddefault}{\updefault}{\color[rgb]{0,0,0}B}%
}}}}
\put(6136,322){\makebox(0,0)[lb]{\smash{{\SetFigFont{6}{7.2}{\rmdefault}{\mddefault}{\updefault}{\color[rgb]{0,0,0}$B^\prime$}%
}}}}
\put(5626,-2498){\makebox(0,0)[lb]{\smash{{\SetFigFont{6}{7.2}{\rmdefault}{\mddefault}{\updefault}{\color[rgb]{0,0,0}$A^\prime$}%
}}}}
\end{picture}%
\quad
& 
\quad
\begin{picture}(0,0)%
\includegraphics{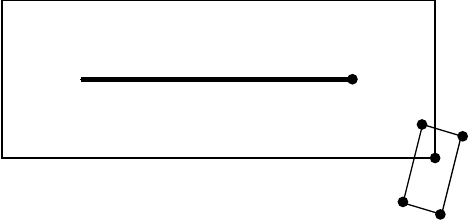}%
\end{picture}%
\setlength{\unitlength}{1657sp}%
\begingroup\makeatletter\ifx\SetFigFont\undefined%
\gdef\SetFigFont#1#2#3#4#5{%
  \reset@font\fontsize{#1}{#2pt}%
  \fontfamily{#3}\fontseries{#4}\fontshape{#5}%
  \selectfont}%
\fi\endgroup%
\begin{picture}(5429,2833)(879,-2422)
\put(6060,-2326){\makebox(0,0)[lb]{\smash{{\SetFigFont{6}{7.2}{\rmdefault}{\mddefault}{\updefault}{\color[rgb]{0,0,0}$A^\prime$}%
}}}}
\put(1801,-286){\makebox(0,0)[lb]{\smash{{\SetFigFont{6}{7.2}{\rmdefault}{\mddefault}{\updefault}{\color[rgb]{0,0,0}$I_{\cV}$}%
}}}}
\put(1036,-1276){\makebox(0,0)[lb]{\smash{{\SetFigFont{6}{7.2}{\rmdefault}{\mddefault}{\updefault}{\color[rgb]{0,0,0}$\cV$}%
}}}}
\put(4771,-871){\makebox(0,0)[lb]{\smash{{\SetFigFont{6}{7.2}{\rmdefault}{\mddefault}{\updefault}{\color[rgb]{0,0,0}P}%
}}}}
\put(5716,-1711){\makebox(0,0)[lb]{\smash{{\SetFigFont{6}{7.2}{\rmdefault}{\mddefault}{\updefault}{\color[rgb]{0,0,0}C}%
}}}}
\put(5498,-909){\makebox(0,0)[lb]{\smash{{\SetFigFont{6}{7.2}{\rmdefault}{\mddefault}{\updefault}{\color[rgb]{0,0,0}B}%
}}}}
\put(6293,-1065){\makebox(0,0)[lb]{\smash{{\SetFigFont{6}{7.2}{\rmdefault}{\mddefault}{\updefault}{\color[rgb]{0,0,0}$B^\prime$}%
}}}}
\put(5131,-2064){\makebox(0,0)[lb]{\smash{{\SetFigFont{6}{7.2}{\rmdefault}{\mddefault}{\updefault}{\color[rgb]{0,0,0}A}%
}}}}
\end{picture}%
\end{tabular}
\vspace{-0.5cm}
\end{center}
\caption{\label{fig:setIV2-imp-1} Forbidden situations: $\cO\cap I_\cV\neq\emptyset$ (left), 
        in the right figure, \eqref{eq:condition1} does not hold since $B \in \cV$ and $C \in \cO$}
\end{figure}

\begin{figure}[!hbtp]
\begin{center}
\begin{tabular}{cc}
\begin{picture}(0,0)%
\includegraphics{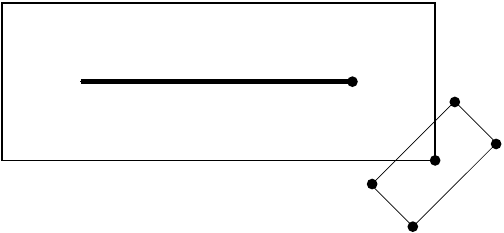}%
\end{picture}%
\setlength{\unitlength}{1657sp}%
\begingroup\makeatletter\ifx\SetFigFont\undefined%
\gdef\SetFigFont#1#2#3#4#5{%
  \reset@font\fontsize{#1}{#2pt}%
  \fontfamily{#3}\fontseries{#4}\fontshape{#5}%
  \selectfont}%
\fi\endgroup%
\begin{picture}(5819,3044)(879,-2633)
\put(6683,-1357){\makebox(0,0)[lb]{\smash{{\SetFigFont{6}{7.2}{\rmdefault}{\mddefault}{\updefault}{\color[rgb]{0,0,0}$B^\prime$}%
}}}}
\put(1801,-286){\makebox(0,0)[lb]{\smash{{\SetFigFont{6}{7.2}{\rmdefault}{\mddefault}{\updefault}{\color[rgb]{0,0,0}$I_{\cV}$}%
}}}}
\put(1036,-1276){\makebox(0,0)[lb]{\smash{{\SetFigFont{6}{7.2}{\rmdefault}{\mddefault}{\updefault}{\color[rgb]{0,0,0}$\cV$}%
}}}}
\put(5716,-1711){\makebox(0,0)[lb]{\smash{{\SetFigFont{6}{7.2}{\rmdefault}{\mddefault}{\updefault}{\color[rgb]{0,0,0}C}%
}}}}
\put(4762,-854){\makebox(0,0)[lb]{\smash{{\SetFigFont{6}{7.2}{\rmdefault}{\mddefault}{\updefault}{\color[rgb]{0,0,0}P}%
}}}}
\put(4982,-2011){\makebox(0,0)[lb]{\smash{{\SetFigFont{6}{7.2}{\rmdefault}{\mddefault}{\updefault}{\color[rgb]{0,0,0}A}%
}}}}
\put(5595,-2537){\makebox(0,0)[lb]{\smash{{\SetFigFont{6}{7.2}{\rmdefault}{\mddefault}{\updefault}{\color[rgb]{0,0,0}$A^\prime$}%
}}}}
\put(6158,-692){\makebox(0,0)[lb]{\smash{{\SetFigFont{6}{7.2}{\rmdefault}{\mddefault}{\updefault}{\color[rgb]{0,0,0}B}%
}}}}
\end{picture}%
\quad
& 
\quad
\begin{picture}(0,0)%
\includegraphics{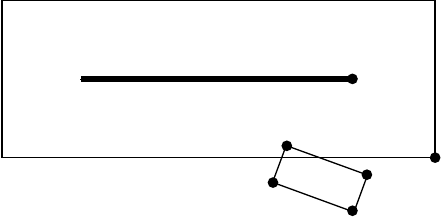}%
\end{picture}%
\setlength{\unitlength}{1657sp}%
\begingroup\makeatletter\ifx\SetFigFont\undefined%
\gdef\SetFigFont#1#2#3#4#5{%
  \reset@font\fontsize{#1}{#2pt}%
  \fontfamily{#3}\fontseries{#4}\fontshape{#5}%
  \selectfont}%
\fi\endgroup%
\begin{picture}(5313,2840)(879,-2429)
\put(5175,-1815){\makebox(0,0)[lb]{\smash{{\SetFigFont{6}{7.2}{\rmdefault}{\mddefault}{\updefault}{\color[rgb]{0,0,0}$B^\prime$}%
}}}}
\put(1801,-286){\makebox(0,0)[lb]{\smash{{\SetFigFont{6}{7.2}{\rmdefault}{\mddefault}{\updefault}{\color[rgb]{0,0,0}$I_{\cV}$}%
}}}}
\put(1036,-1276){\makebox(0,0)[lb]{\smash{{\SetFigFont{6}{7.2}{\rmdefault}{\mddefault}{\updefault}{\color[rgb]{0,0,0}$\cV$}%
}}}}
\put(4771,-871){\makebox(0,0)[lb]{\smash{{\SetFigFont{6}{7.2}{\rmdefault}{\mddefault}{\updefault}{\color[rgb]{0,0,0}P}%
}}}}
\put(5993,-1501){\makebox(0,0)[lb]{\smash{{\SetFigFont{6}{7.2}{\rmdefault}{\mddefault}{\updefault}{\color[rgb]{0,0,0}C}%
}}}}
\put(4928,-2333){\makebox(0,0)[lb]{\smash{{\SetFigFont{6}{7.2}{\rmdefault}{\mddefault}{\updefault}{\color[rgb]{0,0,0}$A^\prime$}%
}}}}
\put(4073,-1111){\makebox(0,0)[lb]{\smash{{\SetFigFont{6}{7.2}{\rmdefault}{\mddefault}{\updefault}{\color[rgb]{0,0,0}B}%
}}}}
\put(3705,-1704){\makebox(0,0)[lb]{\smash{{\SetFigFont{6}{7.2}{\rmdefault}{\mddefault}{\updefault}{\color[rgb]{0,0,0}A}%
}}}}
\end{picture}%
\end{tabular}
\vspace{-0.5cm}
\end{center}
\caption{\label{fig:setIV2-imp-2} Forbidden situations: \eqref{eq:condition1} does not hold, since $C \in \cO$ (left), or $B \in \cV$ (right)}
\end{figure}


We are left with three possible situations, depicted in Figures~\ref{fig:setIV2-1} and \ref{fig:setIV2-23} (left and right):
\begin{itemize}
\item{case 1:}
  each one of two parallel edges $[A;B]$ and $[A'; B']$ of $\cO$ intersect two parallel edges of $\cV$, as in Fig.~\ref{fig:setIV2-1};
\item{case 2:}
  only one edge $[A;B]$ of $\cO$ intersects two parallel edges of $\cV$, as illustrated in Fig.~\ref{fig:setIV2-23} (left);
\item{case 3:}
  no edge of $\cO$ intersect two parallel edges of $\cV$; rather, the two edges $[A;B]$ and $[A';B']$ intersects 
  two neighboring edges of $\cV$, as in Fig.~\ref{fig:setIV2-23} (right).
\end{itemize}


In all these cases, the distance between the parallel edges $[A;B]$ and $[A',B']$ of $\cO$ (which is greater or equal to $2\ell_\cO$),
is smaller than the distance from $P$ (an extremal point of $I_\cV$) to a corner point $C$ of $\cV$ (see Figures~\ref{fig:setIV2-1}-\ref{fig:setIV2-23}),
with $d(P,C)= d(C,I_\cV)=\sqrt{2}\ell_\cV$.
Therefore 
$$ 2 \ell_\cO \leq d(P,C) = \sqrt{2}\ell_\cV. $$
This is in contradiction with the fact that $\ell_\cV \leq \ell_\cO$ (and $\ell_\cV>0$), and concludes the proof of the Lemma.
\endproof

\begin{figure}[!hbtp]
\begin{center}
\begin{tabular}{cc}
\begin{picture}(0,0)%
\includegraphics{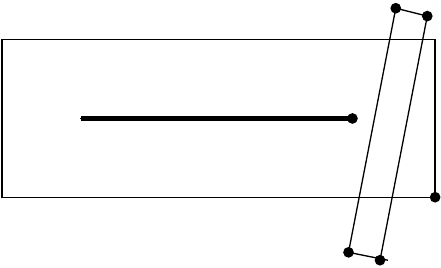}%
\end{picture}%
\setlength{\unitlength}{1657sp}%
\begingroup\makeatletter\ifx\SetFigFont\undefined%
\gdef\SetFigFont#1#2#3#4#5{%
  \reset@font\fontsize{#1}{#2pt}%
  \fontfamily{#3}\fontseries{#4}\fontshape{#5}%
  \selectfont}%
\fi\endgroup%
\begin{picture}(5351,3456)(879,-2452)
\put(4501,-2086){\makebox(0,0)[lb]{\smash{{\SetFigFont{6}{7.2}{\rmdefault}{\mddefault}{\updefault}{\color[rgb]{0,0,0}A}%
}}}}
\put(1801,-286){\makebox(0,0)[lb]{\smash{{\SetFigFont{6}{7.2}{\rmdefault}{\mddefault}{\updefault}{\color[rgb]{0,0,0}$I_{\cV}$}%
}}}}
\put(1036,-1276){\makebox(0,0)[lb]{\smash{{\SetFigFont{6}{7.2}{\rmdefault}{\mddefault}{\updefault}{\color[rgb]{0,0,0}$\cV$}%
}}}}
\put(5986,569){\makebox(0,0)[lb]{\smash{{\SetFigFont{6}{7.2}{\rmdefault}{\mddefault}{\updefault}{\color[rgb]{0,0,0}$B^\prime$}%
}}}}
\put(4771,-871){\makebox(0,0)[lb]{\smash{{\SetFigFont{6}{7.2}{\rmdefault}{\mddefault}{\updefault}{\color[rgb]{0,0,0}P}%
}}}}
\put(5131,794){\makebox(0,0)[lb]{\smash{{\SetFigFont{6}{7.2}{\rmdefault}{\mddefault}{\updefault}{\color[rgb]{0,0,0}B}%
}}}}
\put(5356,-2356){\makebox(0,0)[lb]{\smash{{\SetFigFont{6}{7.2}{\rmdefault}{\mddefault}{\updefault}{\color[rgb]{0,0,0}$A^\prime$}%
}}}}
\put(6031,-1501){\makebox(0,0)[lb]{\smash{{\SetFigFont{6}{7.2}{\rmdefault}{\mddefault}{\updefault}{\color[rgb]{0,0,0}C}%
}}}}
\end{picture}%
\quad
& 
\quad
\end{tabular}
\vspace{-0.5cm}
\end{center}
\caption{\label{fig:setIV2-1} Case 1: two parallel edges of $\cO$ intersect two parallel edges of $\cV$.
}
\end{figure}

\begin{figure}[!hbtp]
\begin{center}
\begin{tabular}{lr}
\begin{picture}(0,0)%
\includegraphics{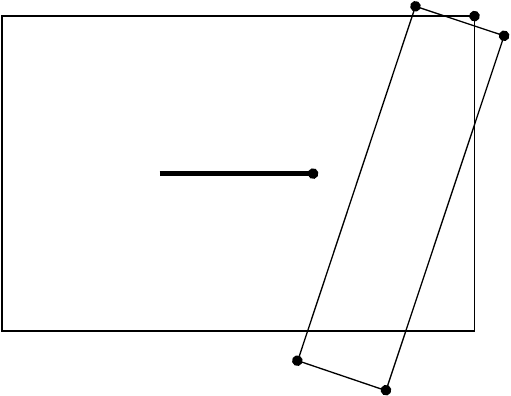}%
\end{picture}%
\setlength{\unitlength}{1657sp}%
\begingroup\makeatletter\ifx\SetFigFont\undefined%
\gdef\SetFigFont#1#2#3#4#5{%
  \reset@font\fontsize{#1}{#2pt}%
  \fontfamily{#3}\fontseries{#4}\fontshape{#5}%
  \selectfont}%
\fi\endgroup%
\begin{picture}(5977,5256)(879,-4387)
\put(6841,209){\makebox(0,0)[lb]{\smash{{\SetFigFont{6}{7.2}{\rmdefault}{\mddefault}{\updefault}{\color[rgb]{0,0,0}$B^\prime$}%
}}}}
\put(1036,-2266){\makebox(0,0)[lb]{\smash{{\SetFigFont{6}{7.2}{\rmdefault}{\mddefault}{\updefault}{\color[rgb]{0,0,0}$\cV$}%
}}}}
\put(2521,-1231){\makebox(0,0)[lb]{\smash{{\SetFigFont{6}{7.2}{\rmdefault}{\mddefault}{\updefault}{\color[rgb]{0,0,0}$I_{\cV}$}%
}}}}
\put(4321,-1231){\makebox(0,0)[lb]{\smash{{\SetFigFont{6}{7.2}{\rmdefault}{\mddefault}{\updefault}{\color[rgb]{0,0,0}$P$}%
}}}}
\put(5356,-4291){\makebox(0,0)[lb]{\smash{{\SetFigFont{6}{7.2}{\rmdefault}{\mddefault}{\updefault}{\color[rgb]{0,0,0}$A^\prime$}%
}}}}
\put(6436,-3301){\makebox(0,0)[lb]{\smash{{\SetFigFont{6}{7.2}{\rmdefault}{\mddefault}{\updefault}{\color[rgb]{0,0,0}$C$}%
}}}}
\put(3871,-3841){\makebox(0,0)[lb]{\smash{{\SetFigFont{6}{7.2}{\rmdefault}{\mddefault}{\updefault}{\color[rgb]{0,0,0}A}%
}}}}
\put(5446,659){\makebox(0,0)[lb]{\smash{{\SetFigFont{6}{7.2}{\rmdefault}{\mddefault}{\updefault}{\color[rgb]{0,0,0}B}%
}}}}
\end{picture}%

\quad &  \quad
\begin{picture}(0,0)%
\includegraphics{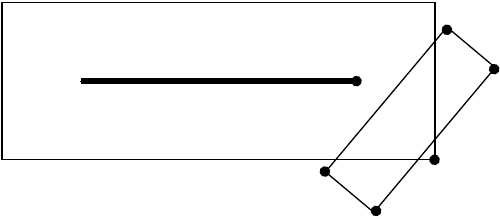}%
\end{picture}%
\setlength{\unitlength}{1657sp}%
\begingroup\makeatletter\ifx\SetFigFont\undefined%
\gdef\SetFigFont#1#2#3#4#5{%
  \reset@font\fontsize{#1}{#2pt}%
  \fontfamily{#3}\fontseries{#4}\fontshape{#5}%
  \selectfont}%
\fi\endgroup%
\begin{picture}(5842,2854)(879,-2443)
\put(6121,164){\makebox(0,0)[lb]{\smash{{\SetFigFont{6}{7.2}{\rmdefault}{\mddefault}{\updefault}{\color[rgb]{0,0,0}B}%
}}}}
\put(1801,-286){\makebox(0,0)[lb]{\smash{{\SetFigFont{6}{7.2}{\rmdefault}{\mddefault}{\updefault}{\color[rgb]{0,0,0}$I_{\cV}$}%
}}}}
\put(1036,-1276){\makebox(0,0)[lb]{\smash{{\SetFigFont{6}{7.2}{\rmdefault}{\mddefault}{\updefault}{\color[rgb]{0,0,0}$\cV$}%
}}}}
\put(5053,-392){\makebox(0,0)[lb]{\smash{{\SetFigFont{6}{7.2}{\rmdefault}{\mddefault}{\updefault}{\color[rgb]{0,0,0}P}%
}}}}
\put(5932,-1540){\makebox(0,0)[lb]{\smash{{\SetFigFont{6}{7.2}{\rmdefault}{\mddefault}{\updefault}{\color[rgb]{0,0,0}C}%
}}}}
\put(6706,-331){\makebox(0,0)[lb]{\smash{{\SetFigFont{6}{7.2}{\rmdefault}{\mddefault}{\updefault}{\color[rgb]{0,0,0}$B^\prime$}%
}}}}
\put(5221,-2356){\makebox(0,0)[lb]{\smash{{\SetFigFont{6}{7.2}{\rmdefault}{\mddefault}{\updefault}{\color[rgb]{0,0,0}$A^\prime$}%
}}}}
\put(4501,-1951){\makebox(0,0)[lb]{\smash{{\SetFigFont{6}{7.2}{\rmdefault}{\mddefault}{\updefault}{\color[rgb]{0,0,0}A}%
}}}}
\end{picture}%
\end{tabular}
\vspace{-0.5cm}
\end{center}
\caption{\label{fig:setIV2-23} 
Case 2 (left): only one edge of $\cO$ (the segment $[A;B]$) intersects two parallel edges of $\cV$.
Case 3 (right): no edge of $\cO$ intersects two parallel edges of $\cV$.
}
\end{figure}




From Lemma~\ref{lem:condition1} and Lemma~\ref{lem:condition2} we deduce, in other words, 
assuming that $\Dt_n\leq \overline{\Dt}:=\underline{d}/\overline{v}$,
that if there is no collision at time $t_n$, then \eqref{eq:condition1}
characterizes the fact that there is no collision at time $t_{n+1}$.

\begin{corol}
In particular, considering a trajectory reconstruction with time steps $t_n=n\Dt$, 
if at the initial time $t_0=0$ the vehicle and the obstacles are disjoint
(i.e, condition \eqref{eq:condition2} holds)
and if $0\leq \Dt<{\overline \Dt}$ where ${\overline\Dt:=\bar d/\bar v}$ and $\bar d$ and $\bar v$ are as in 
\eqref{eq:bard}-\eqref{eq:barv},
and if 
\be\label{eq:gz_negative}
  g(z(t_n),t_n)<0,\quad  \forall n=1,\dots,N
\ee
(i.e., condition \eqref{eq:condition1} holds at each successive time step) 
then there is no collision 
at all discrete time steps $t_n$, i.e.:
\beno
  {\mathcal V}(z(t_n))\ \bigcap\ \bigg( \displaystyle \bigcup_{1\leq i\leq k} \cO_i(t_n)\bigg) = \emptyset\qquad  
    \forall n=1,\dots,N.
\eeno
\end{corol}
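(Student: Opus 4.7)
The plan is to prove the corollary by a straightforward induction on the discrete time index $n$, using Lemma~\ref{lem:condition2} as the single-step inductive tool. The statement to be iterated is: \emph{at time $t_n$ the vehicle and obstacles are disjoint, i.e.\ condition \eqref{eq:condition2} holds with $s = t_n$}. The base case $n=0$ is precisely the standing hypothesis of the corollary.

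For the inductive step, assume that at time $t_n$ the vehicle $\cV(z(t_n))$ is disjoint from $\bigcup_i \cO_i(t_n)$. I verify the three hypotheses of Lemma~\ref{lem:condition2}: hypothesis $(i)$ is exactly the induction hypothesis at $t_n$; hypothesis $(ii)$, namely $g(z(t_{n+1}), t_{n+1}) < 0$, is provided by \eqref{eq:gz_negative} applied at index $n+1$; and hypothesis $(iii)$, the step-size bound $\Dt_n < \underline{d}/\overline{v}$, follows from the standing assumption $\Dt < \overline{\Dt} = \bar d/\bar v$ together with the identification $\bar d = \underline{d}$ from \eqref{eq:bard}. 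Lemma~\ref{lem:condition2} then yields that the vehicle and obstacles are also disjoint at $t_{n+1}$, closing the induction. Iterating the argument from $n = 0$ up to $n = N$ delivers \eqref{eq:condition2} at each discrete time $t_n$, which is the desired conclusion.

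The corollary is therefore almost a direct reformulation of the lemma: the only thing to check is that the step-$(n\to n+1)$ conclusion of the lemma is exactly the form needed as hypothesis $(i)$ for the step $(n+1\to n+2)$, so that the induction chains cleanly. The main mathematical work (the case analysis of parallel versus neighbouring edge intersections, and the geometric contradiction $2\ell_{\cO} \leq \sqrt{2}\,\ell_{\cV}$) is already carried out in Lemma~\ref{lem:condition2}; no further geometric argument is required at the corollary level. One minor bookkeeping point is to note that the condition \eqref{eq:gz_negative} is stated for $n = 1, \dots, N$, which is exactly what the inductive step consumes (the initial disjointness at $t_0$ is given separately, not through $g$).
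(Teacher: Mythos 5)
Your proof is correct and matches the paper's intended argument: the paper presents the corollary as an immediate consequence of Lemma~\ref{lem:condition2}, obtained by exactly the induction you describe (disjointness at $t_n$ plus $g(z(t_{n+1}),t_{n+1})<0$ plus the step-size bound yields disjointness at $t_{n+1}$). Your bookkeeping remarks, including the identification of $\bar d$ with $\underline{d}$ from \eqref{eq:bard}, are accurate.
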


\begin{remark}
By using Lemma~\ref{lem:3.4}, the condition \eqref{eq:gz_negative} will be fulfilled along all time steps of the
trajectory reconstruction.

Therefore the obstacle function $g$ can be used for collision avoidance in our HJB framework as long as the 
time-step condition 
\eqref{eq:dt-lowerbound} holds.
\end{remark}

\medskip

\paragraph{\bf Moving obstacles.} 
There are many possible motions for the obstacles, which can be considered. One of them is
a linear motion along a straight path:
\begin{subequations}\label{eq:obs_motion_line}
\be
  x_i(s) & = & x_{i0}+v_{ix}\, s, \\
  y_i(s) & = & y_{i0}+v_{iy}\, s, \\
  \psi_{i0} & = & \arctan\bigg(\frac{v_{iy}}{v_{ix}}\bigg),
\ee
\end{subequations}
where $(v_{ix},v_{iy})$ is the constant velocity of the obstacle $i$
and $\mt_{i0}$ is the initial angle which is constant during the motion.
Another possibility is a motion along a curved road, e.g.~a rotation around a center $(c_x,c_y)$
with constant angular velocity $\omega_i$ starting with the angle $\mt_{i0}$, i.e.
\begin{subequations}\label{eq:obs_motion_curv}
\be
 x_{i}(s) & = & a\cos(\mt_{i0}+w_i s)+c_x, \\
 y_{i}(s) & = & a\sin(\mt_{i0}+w_i s)+c_y, \\
 \psi_{i0}(s) & = & (\mt_{i0}+w_i s)-\frac{\pi}{2}.
\ee
\end{subequations}

\medskip\noindent

\paragraph{\bf Collision avoidance between time steps.}

It is still possible that a violation of collision avoidance appears between time steps $t_n$ and $t_{n+1}$.
In order to control this error or to avoid it, we first start with the following result.

Let $d(A,B)$ be the signed distance between two sets $A,B$, defined by 
$$ d(A,B):=\min(\mu(A,B),\mu(B,A)) $$
where $\mu(A,B):=\min_{x\in A} d_B(x)$ and $d_B(.)$ is the signed distance to $B$. 
We notice that if $A\cap B=\emptyset \Leftrightarrow d(A,B)>0$, and in that case 
$d(A,B)=\min_{x\in A, y\in B} \|x-y\|_2$.

Let $d(t): = d(\cV(z(t)), \displaystyle\cup_i \cO_i(t))$ be the signed distance between the vehicle and the set of obstacles.

\begin{lemma}\label{lem:avoidance2}
Let $\bar v$ be the maximum relative velocity between the vehicle and the obstacles (as in \eqref{eq:barv}), 
and $\Dt_n:=t_{n+1}-t_n$.
\\
$(i)$ It holds
\be\label{eq:dhineq}
  d(t) \geq  \min\big(d(t_n),d(t_{n+1})\big) -{\bar v}\frac{\Dt_n}{2},\quad \forall t\in [t_n,t_{n+1}].
\ee
$(ii)$
In the same way, in the case of the vehicle and the obstacles are modelized as rectangles, using \eqref{eq:grect},
it holds 
\be \label{eq:g-ineq}
  g(z(t),t)\leq \max(g(z(t_n),t_n),g(z(t_{n+1}),t_{n+1}))+\bar v \frac{\Dt_n}{2} \quad  \forall t\in[t_n,t_{n+1}].
\ee
\end{lemma}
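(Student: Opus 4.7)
The plan is to deduce both parts from an elementary ``midpoint'' principle applied to a $\bar v$-Lipschitz function of time. The principle says that if $h : [t_n,t_{n+1}] \to \R$ has Lipschitz constant at most $\bar v$, then the two pointwise estimates $h(t) \geq h(t_n)-\bar v(t-t_n)$ and $h(t) \geq h(t_{n+1})-\bar v(t_{n+1}-t)$ hold simultaneously for every $t \in [t_n,t_{n+1}]$; summing them gives $h(t) \geq \tfrac12(h(t_n)+h(t_{n+1}))-\tfrac{\bar v\,\Dt_n}{2} \geq \min(h(t_n),h(t_{n+1}))-\tfrac{\bar v\,\Dt_n}{2}$, and reversing the inequalities yields the dual upper bound $h(t) \leq \max(h(t_n),h(t_{n+1}))+\tfrac{\bar v\,\Dt_n}{2}$. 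The task thus reduces to establishing that $d(\cdot)$ and $g(z(\cdot),\cdot)$ are $\bar v$-Lipschitz in $t$.

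For $(i)$, under a planar rigid motion the velocity field of each rectangular body is affine in position, hence the velocity of any material point inside the vehicle rectangle is a convex combination of the four corner velocities $\dot X_k(t)$, and similarly for each obstacle $\cO_i$. Consequently, for any pair of material points $P \in \cV$ and $Q \in \cO_i$ carried by the rigid motions, $\|\dot p(t)-\dot q(t)\| \leq \max_{k,k'}\|\dot X_k(t)-\dot X^i_{k'}(t)\| \leq \bar v$. Writing the signed distance $d(\cV(z(t)),\cO_i(t))$ as an infimum of the $\bar v$-Lipschitz maps $t \mapsto \|p(t)-q(t)\|$ (with the usual sign convention when the two bodies overlap) preserves the Lipschitz constant, as does the final minimum over $i$. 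Applying the lower midpoint bound to $h=d$ then gives \eqref{eq:dhineq}.

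For $(ii)$, each summand of $g$ is handled by the same mechanism. Take $d_{\cV(z(t))}(X^i_j(t)) = d_\ell(\xi(t))$ with $\xi(t):=R_{-\psi(t)}(X^i_j(t)-X(t))$. Differentiating, and using $\tfrac{d}{dt} R_{-\psi} = -\dot\psi\, R_{-\psi} R_{\pi/2}$, one obtains $\dot\xi = R_{-\psi}\bigl(\dot X^i_j - v_\cV(X^i_j)\bigr)$, where $v_\cV(Y):=\dot X+\dot\psi R_{\pi/2}(Y-X)$ is the rigid-body velocity of the vehicle evaluated at $Y$. Since $R_{-\psi}$ is an isometry and $d_\ell$ is $1$-Lipschitz, $|\tfrac{d}{dt} d_\ell(\xi(t))| \leq \|\dot X^i_j-v_\cV(X^i_j)\|$. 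Because $Y \mapsto v_\cV(Y)$ is affine and equals $\dot X_k$ at each vehicle corner $X_k$, the convex-combination argument from $(i)$ bounds the right-hand side by $\max_k\|\dot X^i_j-\dot X_k\| \leq \bar v$. A symmetric argument treats $d_{\cO_i(t)}(X_j(z(t)))$, and the outer maxima over $i$ and $j$ preserve the Lipschitz constant. The dual midpoint inequality applied to $g(z(\cdot),\cdot)$ now yields \eqref{eq:g-ineq}.

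The main technical delicacy is the convex-combination step in part $(ii)$: the bound $\|\dot X^i_j-v_\cV(X^i_j)\|\leq \bar v$ is immediate only when $X^i_j$ lies in the convex hull of the vehicle corners, whereas in the typical collision-free configuration it lies outside the vehicle rectangle, producing only an affine combination whose coefficients need not be nonnegative. I would close this gap by exploiting the structure of $d_\ell$: its value depends on a single coordinate of $\xi$ achieving the outer maximum, which reduces the velocity estimate to a one-dimensional projection onto the corresponding vehicle edge, on which the required convex decomposition is available.
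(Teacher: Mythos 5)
Your part $(i)$ is correct and is in substance the paper's own argument: the paper splits $[t_n,t_{n+1}]$ at its midpoint and applies the one-sided Lipschitz bound from the nearer endpoint on each half, while you keep both one-sided bounds on the whole interval and average them; the two devices are interchangeable (yours even gives the slightly stronger bound with $\tfrac12(d(t_n)+d(t_{n+1}))$ in place of the minimum), and your justification that $d(\cdot)$ is $\bar v$-Lipschitz is more explicit than the paper's one-line assertion.

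The gap is in part $(ii)$, exactly where you flag it, and your proposed repair does not close it. With $\xi(t)=R_{-\psi(t)}\bigl(X^i_j(t)-X(t)\bigr)$ one has $\dot\xi=R_{-\psi}(\dot X^i_j-\dot X)-\dot\psi\,R_{\pi/2}\xi$. The first term is bounded by $\bar v$ (the centre velocity is the average of the corner velocities), but the rotational term has norm $|\dot\psi|\,\|X^i_j-X\|$, which grows with the separation between the obstacle corner and the vehicle and is not controlled by the $\bar v$ of \eqref{eq:barv}. Projecting onto the active coordinate does not help: the first component of $-\dot\psi R_{\pi/2}\xi$ is $\dot\psi\,\xi_2$, and knowing which of the two terms of $d_\ell$ attains the outer extremum gives no bound on the \emph{other} coordinate $\xi_2$. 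A concrete obstruction: a vehicle rotating in place ($\dot X=0$, $\dot\psi=w$) with a fixed obstacle at distance $R\gg\|\ell\|$ gives $\bar v=|w|\sqrt{\ell_x^2+\ell_y^2}$, while $t\mapsto d_{\cV(z(t))}(X^i_j)$ oscillates with slope of order $|w|\,R$; when this term realizes the outer maximum in \eqref{eq:grect}, the estimate \eqref{eq:g-ineq} with that $\bar v$ fails. So what is missing is not a cleverer decomposition but an additional hypothesis: either restrict the claim to the near-contact regime, where the corners realizing the maximum lie within $O(\ell)$ of the other body so that $\|\xi\|$ --- hence the rotational term --- is comparable to the corner speeds entering $\bar v$, or enlarge $\bar v$ to a bound on $\max_{i,j}\|\dot\xi\|$ over the computational domain. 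Note that the paper itself proves only $(i)$ and asserts $(ii)$ ``in the same way'', so you are not missing an argument that the authors supply.
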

\begin{proof}
We only give a sketch of the proof of $(i)$.
The maximum distance that a point of the vehicle can cover during a time step $\tau$ is $\tau \bar v$. 
Taking into account the relative velocity, we obtain that on the time interval $t\in [t_n,t_n+\Dt_n/2]$,
$d(t)\geq d(t_n)-{\bar v}(\Dt_n/2)$.
In the same way one can prove that for $t\in [t_n+\Dt/2,t_{n+1}]$, $d(t)\geq d(t_{n+1})- {\bar v} (\Dt_n/2)$.
Hence the desired formula \eqref{eq:dhineq} follows.
\end{proof}

{\em Minimal distance between time steps.}
As a first result, if 
$$\min(d(t_n),d(t_{n+1}))\geq 0$$
(no collision at time steps $t_n$ and $t_{n+1}$), then
$$
  \min\limits_{t \in [t_n,t_{n+1}]} d(t)\geq -\bar v \frac{\Dt_n}{2}.
$$

{\em Secure collision avoidance.} 
On the other hand if we want to secure collision avoidance, 
we can consider a small number $\eps>0$ 
and require that
\be\label{eq:spec_enlarged}
  \min(d(t_n),d(t_{n+1}))>\eps,
\ee
Then using \eqref{eq:dhineq} it holds:
\be\label{eq:dtgeq0}
  \frac{\Dt_n}{2} < \eps/\bar v \quad \Rightarrow \quad
  \forall t\in [t_n,t_{n+1}],\quad d(t)>0.
\ee




\begin{remark}
Consider the case of vehicle and obstacle being rectangles, let $g$ be defined as in~\eqref{eq:grect},
$\tilde g_\eps$ the following obstacle function
$$
  \tilde g_\eps := g+\eps,
$$
and assume that 
\be \label{eq:epscollavoid}
  \max(\tilde g_\eps(z(t_n),t_n),\tilde g_\eps(z(t_{n+1}),t_{n+1}))\leq 0,
\ee
then using \eqref{eq:g-ineq}, it holds
\beno
  \frac{\Dt_n}{2}\leq \eps/{\bar v} \quad \Rightarrow \quad
  \max_{t \in[t_n,t_{n+1}]} g(z(t),t) \leq -\eps + \bar v \frac{\Dt_n}{2} \leq 0.
\eeno
Therefore, if there is collision avoidance neither at time $t_n$ nor at time $t_{n+1}$ within  a margin of length $\eps>0$,
(i.e., if \eqref{eq:epscollavoid} holds),
and if $\Dt_n/2\leq \eps/{\bar v}$,
then collision avoidance will also hold on $[t_n,t_{n+1}]$.
\end{remark}

\section{Numerical Simulations}
Here we plot some numerical simulations for different scenarios. 
Throughout the paper,
we use the 4-dimensional point mass model~\eqref{eq:model}.
In the present work and for solving the HJ equation \eqref{hjb-t},
we have used an Essentially Non Oscillating (ENO) finite difference scheme of second order for the spatial discretization of the
Hamilton-Jacobi-Bellman partial differential equation.
It is coupled with an Euler forward scheme (RK1) in time
(see \cite{osh-shu-91}).
The computations have been performed by using the ROC-HJ parallel solver~\cite{rochj}.

\subsection{Scenario~1: straight road with a fixed rectangular obstacle.}

We first consider a straight road configuration with only one fixed obstacle,
as in Figure~\ref{scenario_case}. The aim for the vehicle (represented by the red box)
is to minimize the time to reach the target area (the target, delimited by a blue line)
and to avoid the obstacle (blue box).
More precisely the parameters of the problem are:

\begin{itemize}
   \item Vehicle parameters and initial position:\\
         half lengths $\ell_x=\ell_y=1.0$,  \\
         initial value $z_0=(x_0,y_0,\mt_0,v_0)=(-40.0,\ -1.5,\ 0.0,\ 35.0)$
   \item Target: $\Omega=\{(x,y),\ x\geq 0,\ |y|\leq 3.5\}$. 
   \item Obstacle parameters:
         one fixed obstacle with half lengths $\ell^1_x=1.0$ and $\ell^1_y=1.0$, centered at 
         $X^1=(-10.0,\ -1.5)$
   \item Road parameters:
         straight road as described in \eqref{eq:Kr_straight}.
\end{itemize}

\begin{figure}
\includegraphics[width=10cm,angle=0]{\figs/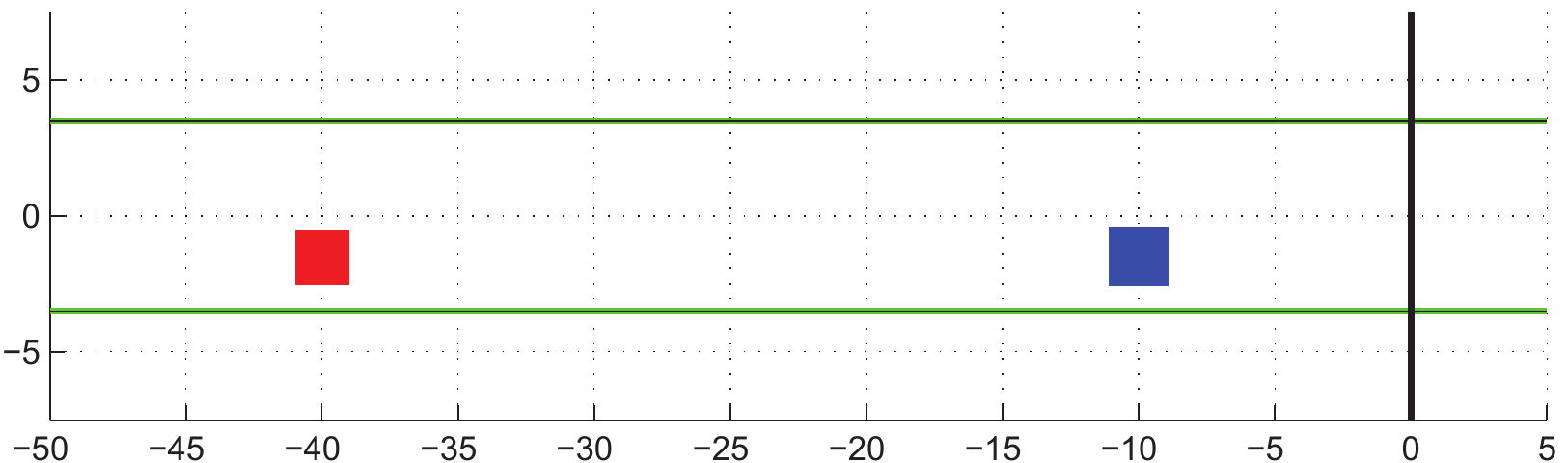} 
  \caption{
  \label{scenario_case}
  Car traffic scenario: \MODIF{the red car represents the}
  \MODIF{controled vehicle, the blue car is an obstacle (and is fixed in}
  \MODIF{this scenario)}
  }
\end{figure}

In Fig.~\ref{fig:0} the capture basin is represented in blue and for different times.
At time $t=0$ only the points which are in the target area
(and away from the road boundary) are thus represented. 
Then the evolution of the union of (backward) reachable sets is represented for different times.
At time $t_i$, the region 
$Cap(t_i)$ represents the set of starting points 
which can reach the target avoiding the obstacle within $t_i$ seconds.

Next, in Fig.~\ref{fig:1}, 
we have represented the initial position of the vehicle as well 
as the reconstruction of the optimal trajectory (as black line):
the car (red rectangle) drives from left to right in Fig.~\ref{fig:1} 
and it overtakes the fixed obstacle.

The uncolored region (white part) in Fig.~\ref{fig:1} corresponds to the area of starting points from which the blue car cannot reach the target,
whatever maneuver is undertaken (mainly, its velocity is too high to avoid a collision with the blue car). Hence the trajectories
from these points are infeasible, and such starting points do not belong to the backward
reachable sets.


\begin{figure}[!hbtp]
\begin{center}
\includegraphics[width=6cm,angle=0]{\figs/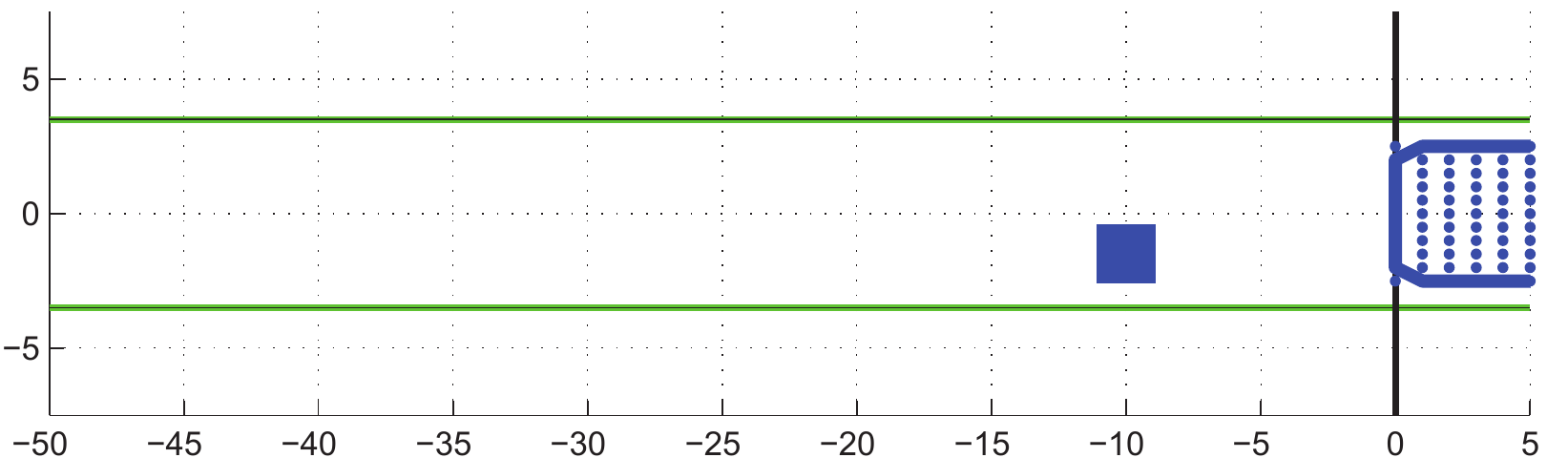} 
\includegraphics[width=6cm,angle=0]{\figs/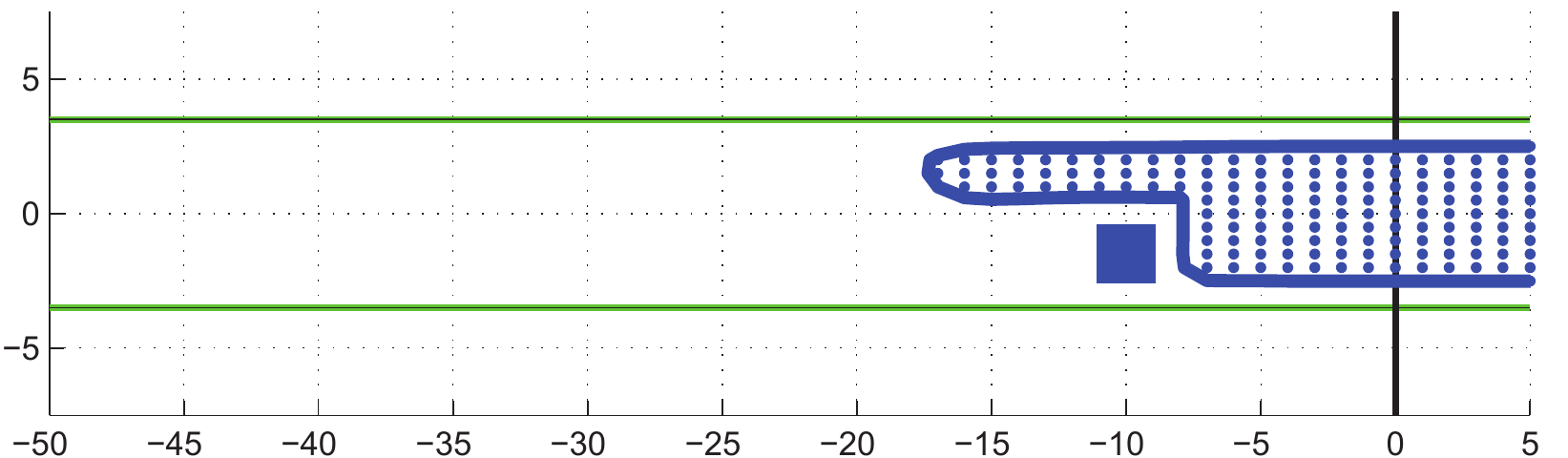} \\
\includegraphics[width=6cm,angle=0]{\figs/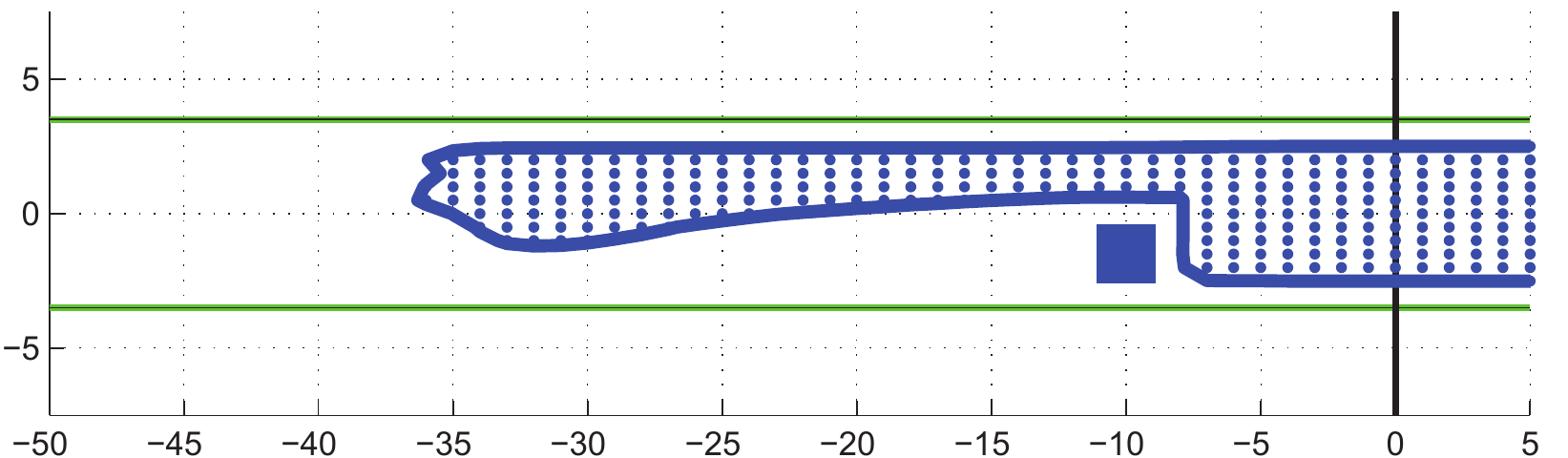} 
\includegraphics[width=6cm,angle=0]{\figs/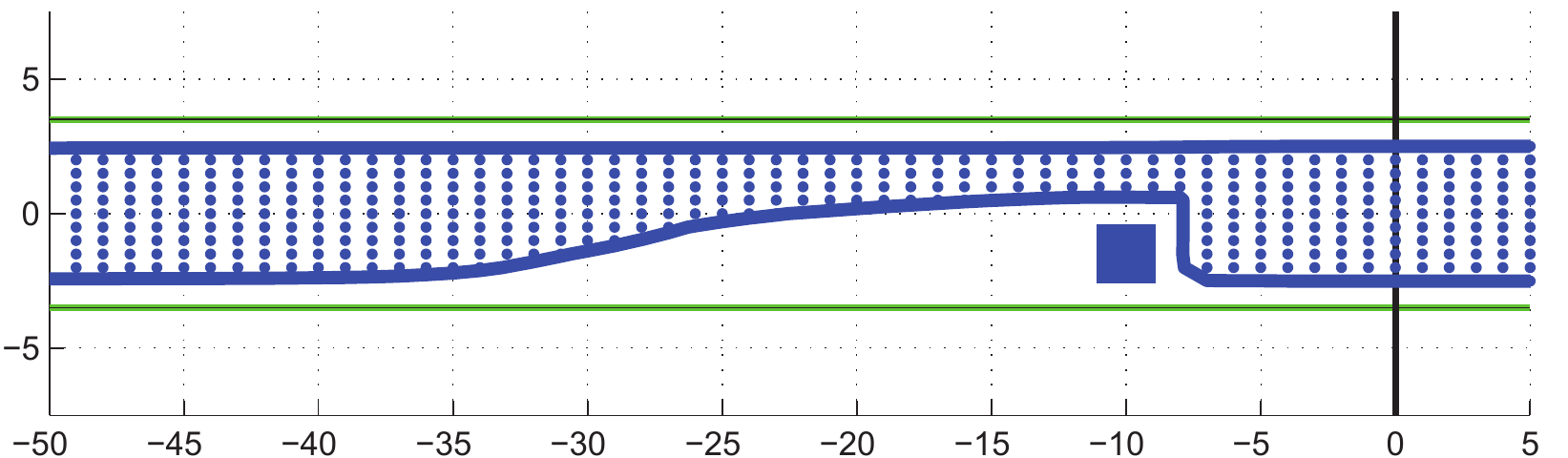} 
\end{center}
\caption{\label{fig:0} (Scenario 1) capture basins $Cap(t_i)$ for different times
($t_1=0s, t_2=0.5s, t_3=1s, t_4=1.5s$) 
}
\end{figure}

\begin{figure}
\includegraphics[width=10cm,angle=0]{\figs/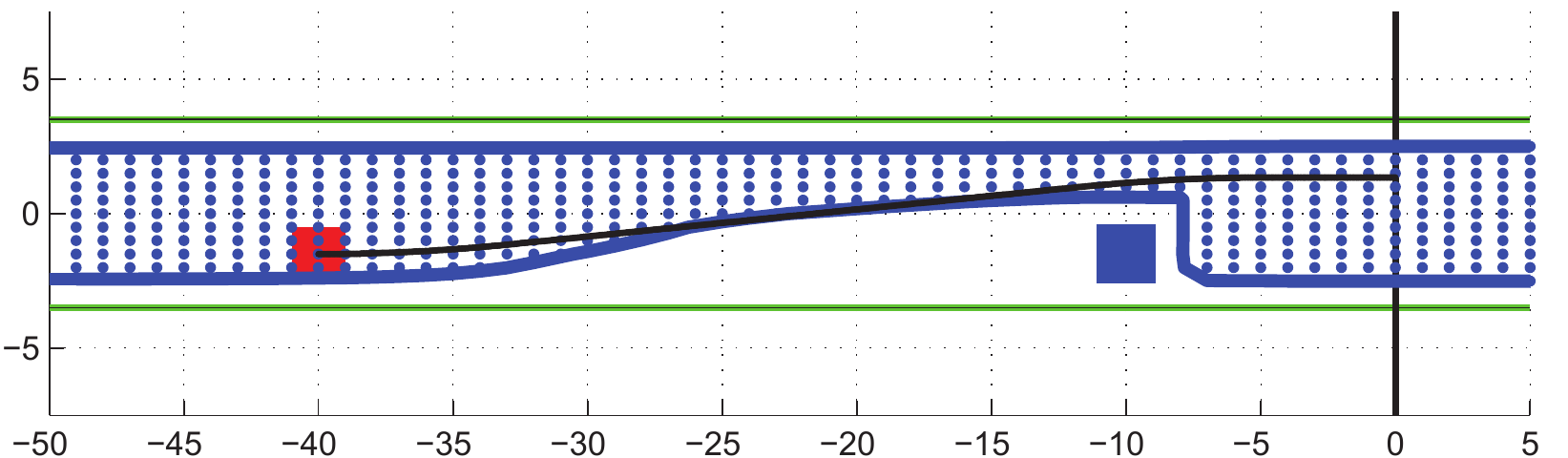} 
\caption{\label{fig:1} (Scenario 1) capture basin $Cap(2)$
and optimal trajectory. 
}
\end{figure}

\paragraph{\bf Convergence test (scenario 1)}
For testing the stability of the HJ approach we first perform a convergence analysis with respect to mesh grid refinement.
We define a grid on the state space \MODIF{(or computational domain)}
\be
  (x,y,\psi,v)\in [-50,10]\times[-4,4]\times [-1,1]\times[5,65]. 
\ee
A variable number of grid points in the $(x,y)$ variables is used, given by 
$$
  \mbox{$N_x=35\cdot 2^m$ and $N_y=4\cdot 2^m$,}
$$
depending on an integer parameter $m\in\{1,\dots,5\}$.
The number of grid points in the $\psi$ and $v$ variables are fixed and given by
$$
  N_{\psi}=20 \quad\mbox{and}\quad N_v=6.
$$
The errors are computed by using a reference value function $v_{ex}$ obtained for $m=5$ 
(i.e., $N_x=1120$, $N_y=128$). 
Furthermore the following CFL (Courant-Friedriech-Levy) restriction 
\be\label{eq:CFL}
  \Delta t\ \sum_{j=1}^4 \frac{\|f_j\|_\infty}{\Delta x_j} \leq \frac{1}{2}
\ee
is used for the stability of the finite difference scheme
(where $f_j$ are the components of the dynamics $f$ and $\|f_j\|_\infty$ is the supremum norm of $f_j$ on the computational domain,
$\Delta z:=(\Delta x_j)_{1\leq j\leq 4}\equiv (\Delta x, \Delta y, \Delta\psi, \Delta v)$ are the different grid mesh steps).

The results are given in Table~\ref{tab:1}. 
For a given grid mesh $(z_i)$
and corresponding vector of mesh sizes $\Delta z$, the local error at grid point $z_i$ is $e_i:= v(z_i)- v_{ex}(z_i)$ 
and the $L^\infty$, $L^1$ and $L^2$ errors are defined as follows:
\be
 e_{L^\infty}:=\max_{i} |e_i|,\quad
 e_{L^1}     :=|\Delta z|\, \sum_{i} |e_i|,\quad
 e_{L^2}     :=|\Delta z|^{1/2}\, \bigg(\sum_{i} e_i^2\bigg)^{1/2}
\ee
where
$|\Delta z|:=\prod_{1\leq j \leq 4} |\Delta x_j|$. 

In Table~\ref{tab:1}, 
in order to evaluate numerically the order of convergence for a given $L^p$ norm,
the estimate $\alpha_m:=\frac{\log(e^{(m-1)}/e^{(m)})}{\log(2)}$ is used for 
corresponding values $N_x=35\cdot 2^m$ and $N_y = 4\cdot 2^m$
(i.e., the mesh steps $N_x$ and $N_y$ are refined by $2$ between two successive computations).

We observe a convergence of order roughly $2$ even for the ENO2-RK1 scheme which in principle 
is only first order in time. This is due to the fact that the dynamics is close to a linear one
in this case (we have also tested a similar RK2 scheme, second order  in time, which gives similar convergence results
on this example).

\begin{table}[!hbtp]
\begin{tabular}{|cc|c|cc|cc|cc||c|}
\hline 
 & & & & & & & &
 & \small CPU \\
 \small $N_x$ & \small $N_y$  & \small $\Dt$
 & \small $e_{L^\infty}$   & \small order 
 & \small $e_{L^1}$        & \small order 
 & \small $e_{L^2}$        & \small order 
 & \small time (in s) \\[0.1cm]
\hline
\hline 
  70 &  8 & 3.97 E-3 & 0.489  &   --  & 1.126  &  --  & 6.769 &  --  &  0.34 \\ \hline
 140 & 16 & 2.03 E-3 & 0.078  & 2.64  & 0.337  & 1.73 & 2.255 & 1.58 &  1.50 \\ \hline
 280 & 32 & 1.02 E-3 & 0.026  & 1.60  & 0.118  & 1.52 & 0.795 & 1.50 &  9.20 \\ \hline
 560 & 64 & 0.51 E-3 & 0.006  & 2.07  & 0.030  & 1.98 & 0.207 & 1.94 & 69.40 \\ \hline
\end{tabular}
\vspace{0.1cm}
\caption{\label{tab:1} (Scenario 1) error table for varying $(N_x,N_y)$ parameters}
\end{table}

\if{
\begin{remark}
\deleted[id=Olivier]{
Taking into account furthermore a safety margin of size $\eps=20\,cm$, 
according to Lemma~\ref{lem:avoidance2} we can guarantee that 
a collision does not occur at any time
for obstacles of dimension $1.6\,m$ \added[id=Robert]{(in each $x$- and $y$-direction)}
in order to be feasible 
(REM OF R.B: Unclear remark: Lemma~\ref{lem:avoidance2} needs a value for $\bar v$,
the obstacle dimensions appear only in Lemma~\ref{lem:condition2} (which is wrong).
Furthermore, is it an additional remark for Scenario 1?)
}
\end{remark}
}\fi

\paragraph{\bf Comparison with a direct method (scenario 1)}
For this particular scenario, we validate the results
by comparing with a direct optimal control approach for calculating the reachable set.
The simulations are obtained by using the OCPID-DAE1 Software~\cite{ocode},
and following the approach described in~\cite{bai-ger-xau-2013} and~\cite{ifip,mtns}.
The resulting capture basin for time $T=2$ is plotted in Fig.~\ref{fig:2} 
(upper graph) and it is in good correspondence with the set obtained by the HJ approach (Fig.~\ref{fig:2}, lower graph).
Notice that we can only expect that both computed capture basins are
equal up to some accuracy of the order $O(\Dx)$ (with $\Dx=1$ in this figure).


\begin{remark}\label{rem:5.2}
{\em 
The advantage of using a direct optimal control approach (like the OCPID-DAE1 Software)
is that it is able to deal with a greater number of 
states variables, which is necessary whenever we need a precise car-model, close to the behavior of a real car.
However the handling of state constraints (in particular obstacles with nonsmooth boundaries)
sometimes leads to numerical difficulties in order to compute feasible trajectories. 

On the other hand the pde solver 
(like ROC-HJ for solving Hamilton-Jacobi equations) is limited, in practice, by the number of state variables,
because it requires to solve a pde with as many dimensions as the number of state variables.
However if the dimension can be processed on a given computer system, then
the HJ approach requests only the Lipschitz property of the functions describing the dynamics and the state constraints.
In particular, there is no problem for dealing with nonsmooth obstacles such as rectangular obstacles, crossing roads scenarios
(more generally, it could handle polygonal roads or more complex polytopial obstacles).
}
\end{remark}

\if{
\begin{figure}[H]
\includegraphics[width=10cm,angle=0]{\figs/brsOC-eps-converted-to.pdf} 
\includegraphics[width=10cm,angle=0]{\figs/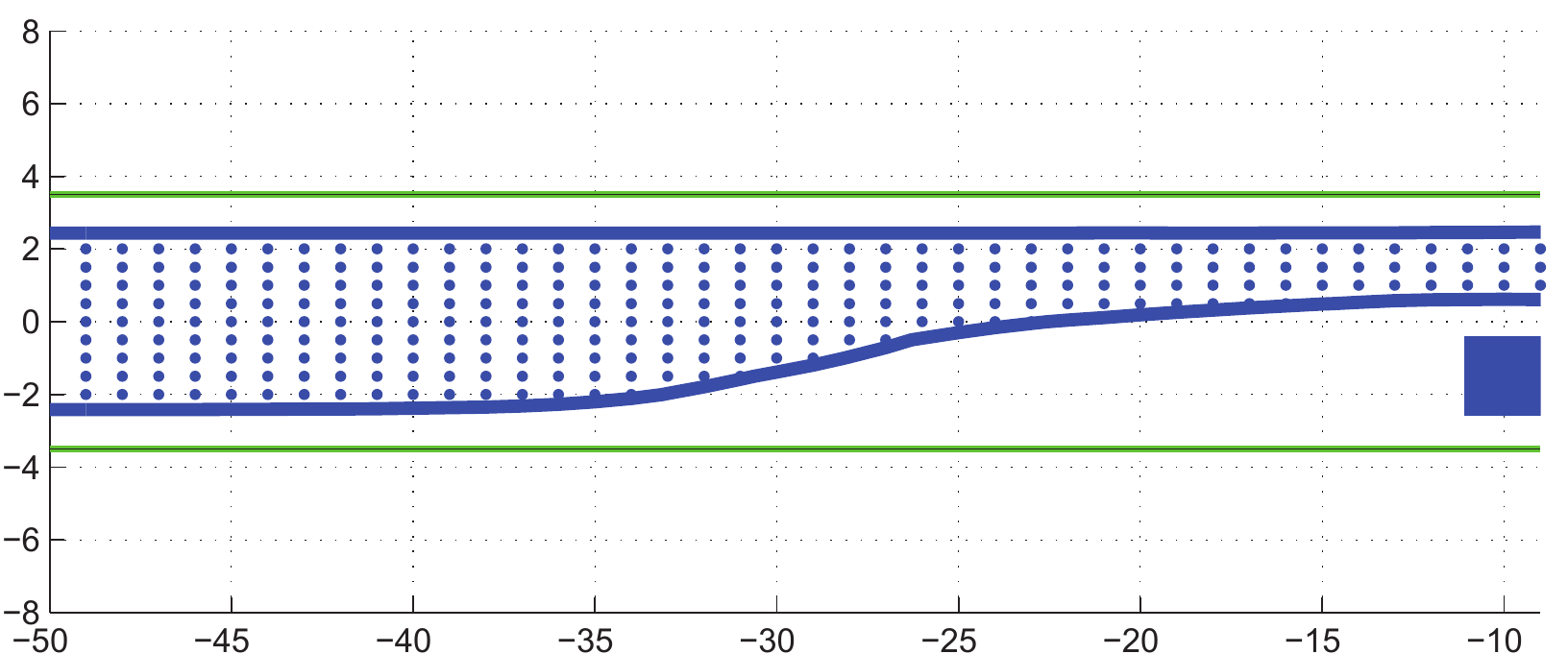}
\caption{\label{fig:2} (Scenario 1) comparison of a direct method (left) with the HJB approach (right)}
\end{figure}
}\fi


\begin{figure}[!h]
  \hspace{-0.1cm}\includegraphics[width=10.1cm,angle=0]{\figs/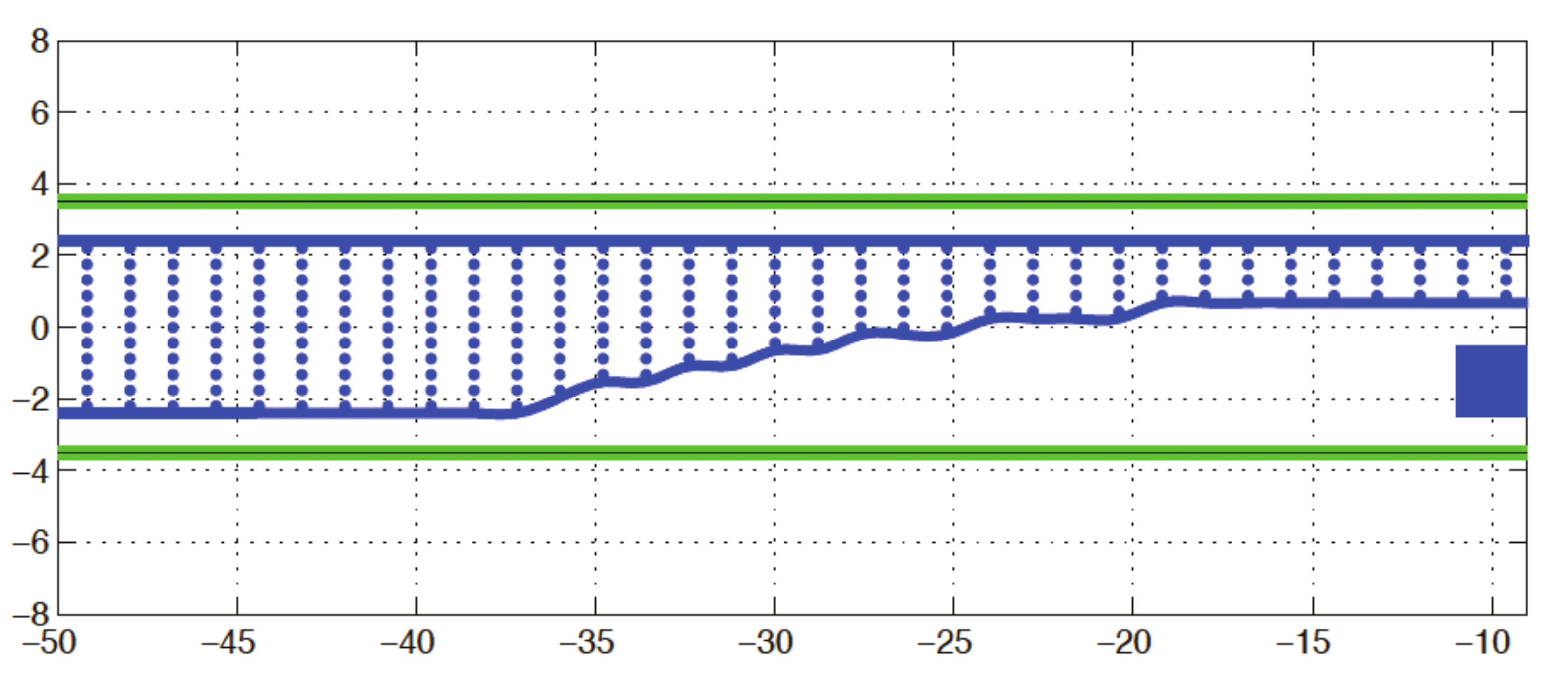}
\includegraphics[width=10cm,angle=0]{\figs/brsHJ-eps-converted-to.pdf}
\caption{\label{fig:2} (Scenario 1) comparison of capture basins obtained with a direct method (up) and the HJ approach (down)}
\end{figure}




Next, we consider more complex scenarios and different road geometries.

\subsection{Scenario 2: straight road with varying width}
We shall consider a highway road with varying width : 
\begin{equation*}\left\{
\begin{array}{l}
y_{up}=3.5~m\\
y_{down}= -3.5~m, \mbox{ if } z_1\leq -15.0\\
y_{down}=-7.0~m, \mbox{ if } z_1 > -15.0. 
\end{array}
\right.
\end{equation*}
This is illustrated in Figs.~\eqref{set_straightlarger} and \eqref{set_straightlarger_more} (the road is represented with green lines).
This can be interpreted as an additional exit lane appearing only for a short part of the considered road. 

Two obstacles (blue rectangles) are moving with linear motion (see \eqref{eq:obs_motion_line}) 
in the same direction as the reference vehicle (red rectangle). All object widths and lengths are here equal to $1~m$.
The set of blue points depicted in Figure~\ref{fig:ex_sl3} and Figure~\ref{fig:ex_sl2} is the projection on the $(x,y)$-panel
of the capture basin for $t_f=2~s$, 
with yaw angle $\psi(t_0)=0$ and velocity $v(t_0)=35~m\,s^{-1}$. 

\medskip

{\em Scenario 2a:} (see Fig.~\ref{fig:ex_sl3})
In this example an overtaking maneuver is considered with one obstacle (first car) in front of the vehicle,
moving forward with velocity $10\,ms^{-1}$ and to be overtaken,
and a second obstacle (second car) next to the vehicle 
also moving forward but with higher velocity $20\,m s^{-1}$ and blocking the maneuver.
In Fig.~\ref{fig:ex_sl3}, the initial position of the vehicle and 
of the obstacle cars are depicted at the initial time $t_0=0$.
The parameters used in the computation for this figure are the grid with
 $(N_x,N_y)=(70,12)$; the trajectory (black line) is starting from
$(x(0),y(0))=(-40.0,\, -1.5)$, $\psi(0)=0$ and $v(0)=35 ms^{-1}$, a
first obstacle car takes initial values $(x(0),y(0))=(-10,\,-1.5)$, with $\psi(0)=0$ and a constant velocity $v(0)=10\,ms^{-1}$;
a second obstacle car takes initial values
$(x(0),y(0))=(-40,\, 1.5)$, with $\psi(0)=0$ and a constant velocity $v(0)=20\,ms^{-1}$.    

\begin{figure}[!hbtp]
\includegraphics[width=10cm,angle=0]{\figs/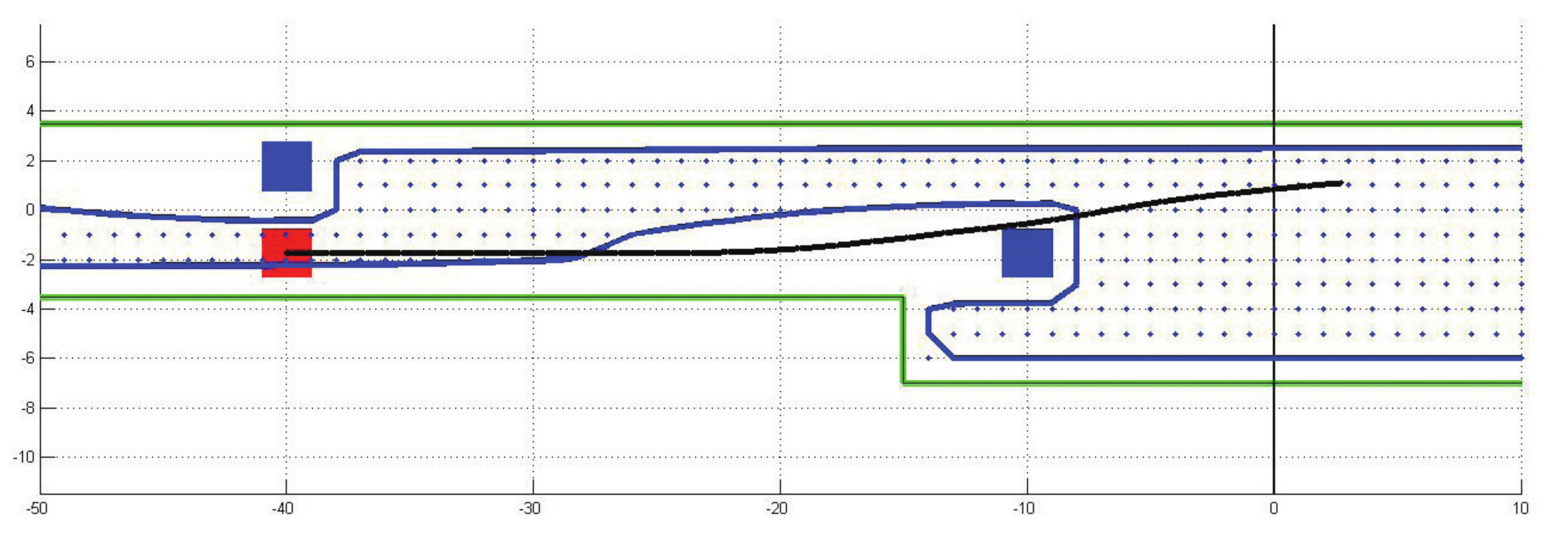}
  \caption{\label{fig:ex_sl3}(Scenario 2a) capture basin $Cap(2)$. 
  \MODIF{Here the blue car above the vehicle (red car) is moving 
  forward (to the right) at constant velocity $20\, ms^{-1}$, the second blue car also moves forward at velocity $10\,ms^{-1}$,
  while the controled}
  \MODIF{
  vehicle (red car) has an initial velocity of $35\,ms^{-1}$.}
}
\end{figure}

\medskip

{\em Scenario 2b:} (see Fig.~\ref{fig:ex_sl2})
This example is similar to the previous one \MODIF{except for} the fact that the second car is
next to the first one at initial time.
The second obstacle car now takes initial values $(x(0),y(0))=(-10,\, 1.5)$, and other parameters are otherwise unchanged.


\begin{figure}[!hbtp]
\includegraphics[width=10cm,angle=0]{\figs/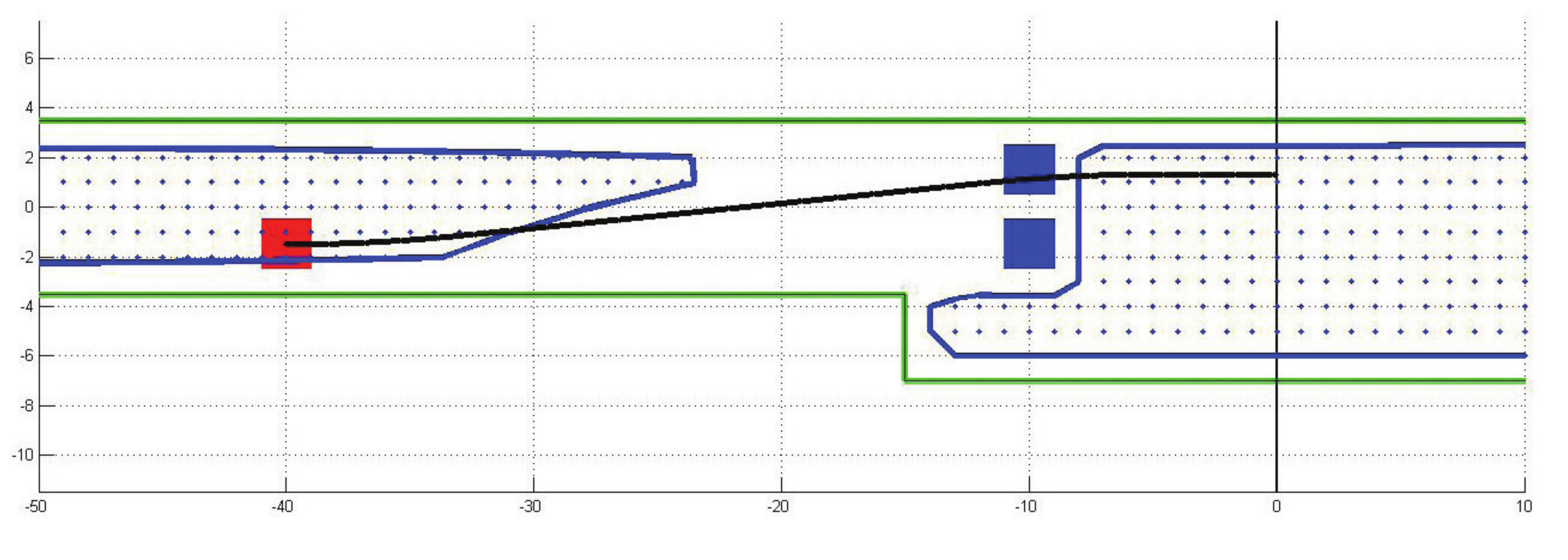}
\caption{\label{fig:ex_sl2}(Scenario 2b) 
{capture basin $Cap(2)$, non-connected, in contrary to Fig.~\ref{fig:ex_sl3}}
}
\end{figure}

The capture basin is the set of initial points of $\R^4$ (according to the model \eqref{eq:model}) 
for which the collision can be avoided and the target can be reached within $t_f$ seconds
(so that the final conditions $x(t_f)\geq 0$ and $\psi(t_f)=0$ are satisfied). 
The set of blue points depicted in Figure~\ref{fig:ex_sl2} (also in Figure~\ref{fig:ex_sl3})
is the projection on the $(x,y)$-plane 
of the capture basin, where the yaw angle and the velocity are fixed to the values
$\psi(t_0)=0$ and $v(t_0)=35~m\,s^{-1}$ (which corresponds to the initial values of the car when the maneuver starts).

By starting in the blue region, the vehicle (in red) can avoid a collision. 
On the other hand, starting from a point in the white area will lead to infeasibility, i.e., the 
vehicle will either go outside the road or will collide with the obstacle
before being able to reach the target area.

In both figures we notice an extra part of the backward reachable set which lays below the second obstacle.
This is due to the varying road width, and it means that the vehicle 
may also start from the exit lane.

In Figure~\eqref{fig:ex_sl2} the capture basin is not connected
which means that the vehicle (red rectangle) can avoid a collision by starting the maneuver either leaving the obstacles 
behind (since it is faster no crash will occur), by bypassing the second car from the exit lane
or by starting from a sufficiently large distance
behind the two obstacles depending on its $y$ position 
(about $24\,m$ if the reference vehicle starts in the first lane and $14\,m$ if it starts in the second lane).

The optimal trajectory (black line) seems to overlap the obstacles and their trajectories 
before reaching the target set. This is because the blue rectangles only show the initial position 
of the obstacles at time $t_0$, and not the evolution of their linear motions in the time interval $[t_0,t_f]$.

\subsection{Scenario 3: curved road with fix or moving obstacles}
The road shape is now described by the set of equations~\eqref{eq:set_curve} with a $7$~m width and a road radius of $50~m$; 
the boundaries of the road as shown by green lines in Figure~\ref{fig:curve2}.  
The initial velocity of the vehicle is set to $v(t_0)=30\,ms^{-1}$. 

{\em Scenario 3a:}
Two fixed obstacles (blue rectangles with different width and length parameters)
have to be avoided by the vehicle (red square), see Fig.~\ref{fig:curve2}, 
and the target $x\geq 0$, $\psi=0$ has to be reached, if possible, in the time interval $[0,5]$ measured in seconds.
The first obstacle has dimensions $0.5~m$ and it is positioned in $(-5,48.25)$. The second obstacle has width $0.5~m$ and length $1~m$, its position is $(-25,45)$.

\begin{figure}[!hbtp]
\begin{center}
\includegraphics[width=7cm,angle=0]{\figs/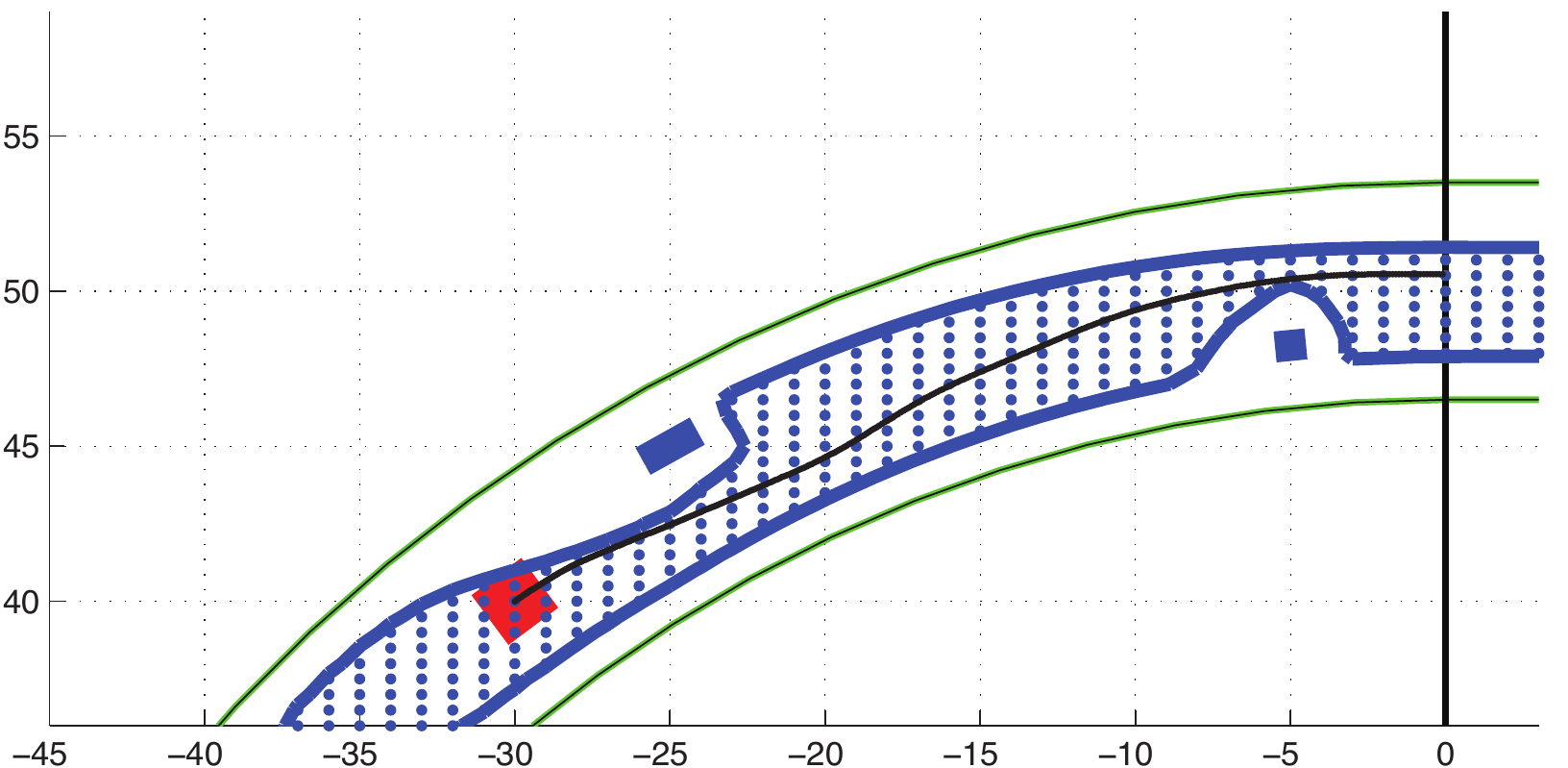}
\end{center}
\caption{\label{fig:curve2}(Scenario 3a) 
capture basin $Cap(5)$
for a curved road and two fixed obstacles
}
\end{figure}

{\em Scenario 3b:}
In this scenario depicted in Figure~\ref{fig:curve1}, 
the road parameters are similar, there is now only one obstacle but it is furthermore moving with a circular motion at speed of $5\,ms^{-1}$.

\begin{figure}[!hbtp]
\begin{center}
\includegraphics[width=7cm,angle=0]{\figs/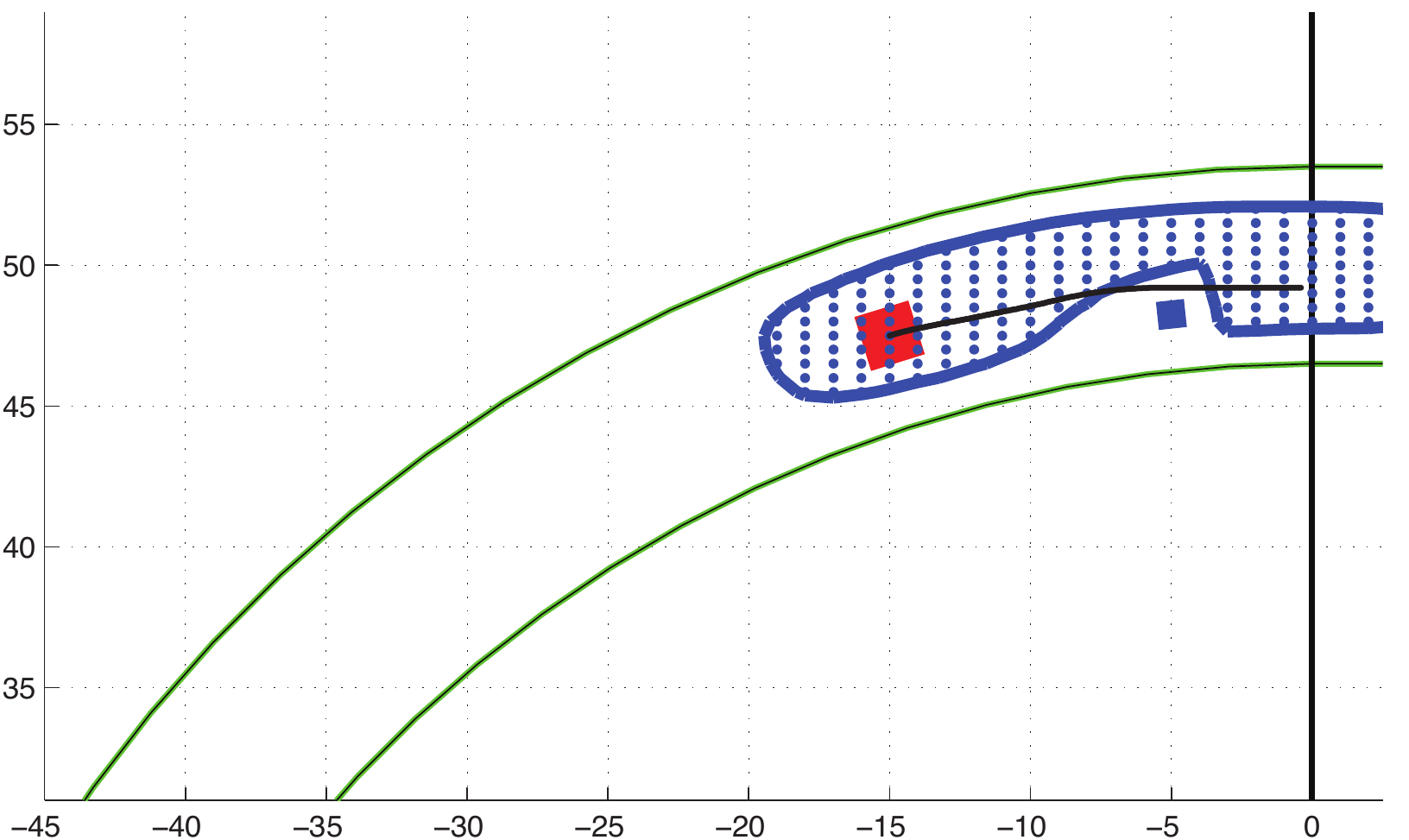}
\end{center}
\caption{\label{fig:curve1}(Scenario 3b) capture basin $Cap(5)$
for a curved road and one moving obstacle
}
\end{figure}

\subsection{Scenario 4: crossing road and moving obstacles} 

The following scenario involves a crossing. As in \eqref{funct_crossing} the width of the four streets 
involved in the crossing can be different.

Here, the horizontal lower and upper road bounds and the vertical limits are different, as
illustrated in Fig.\ref{fig:ex_cr2}.
An object (blue rectangle) of dimensions $1\,m$ is traveling from left to right from position $(-10.0,-2.0)$ meters 
with speed $5\,m s^{-1}$ and deceleration $5\,m s^{-2}$ (until a stop at $t=1s$). 
A second obstacle (length $1.0~m$ and width $2.0~m$) starting from position 
$(-18,4)$
is traveling from top to bottom with speed $5\,ms^{-1}$ and deceleration 
$5\,ms^{-2}$ (until a stop at $t=1s$).
Within time $t_f=2.5\,s$ the red square of dimensions $1.0\,m$ has to reach one of the three targets 
at the end of each road: top (with yaw angle $\frac{\pi}{2}$), bottom 
(with yaw angle $-\frac{\pi}{2}$) or right (with yaw angle $0$). 
At the initial speed  $v(t_0)=20.0\,ms^{-1}$ and for an initial position as in~Fig.\ref{fig:ex_cr2}, 
the red vehicle is able to leave the crossing before the second obstacle enters the center of the crossing. 
In this example the optimal trajectory (black line) will steer to overtake
the obstacle in the front and it will also decelerate 
to avoid the second obstacle.


\begin{figure}[!hbtp]
\begin{center}
\includegraphics[width=5cm,angle=0]{\figs/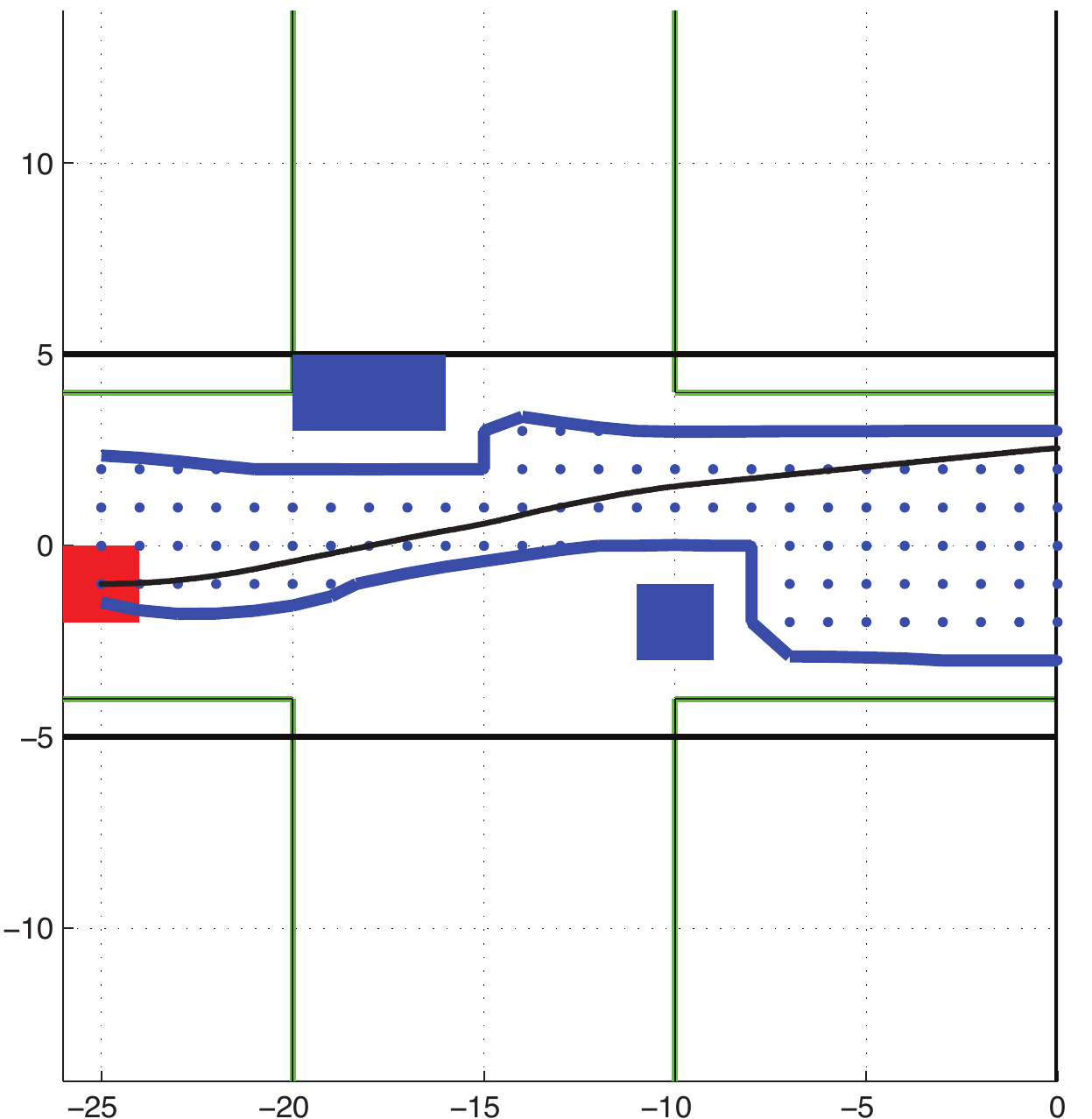}
\end{center}
\caption{\label{fig:ex_cr2} (Scenario 4)
capture basin $Cap(2.5)$ for a crossing with one fixed and one moving obstacle
}
\end{figure}

\section{Conclusion}
In this paper, we have shown the feasibility of the HJB approach
for computing target regions for vehicle collision avoidance problems.

\MODIF{
We have modelized the vehicle avoidance problem by using a 4-dimensional "point mass" model to describe the vehicle, with different
road geometries as well as different obstacles (fixed or moving obstacle vehicles).
We make use of level-set functions in order to represent the reachable sets, the obstacles (road boundaries), the obstacle avoidance
(between possibly moving vehicles). This complex situation leads in general to non-convex reachable sets. These sets
can be represented by using level sets of functions encoded on a grid mesh.
The HJB approach turns out to be a powerful tool especially for complicated road geometries and multiple obstacles,
and can handle general nonlinear dynamics.
An avoidance procedure for the avoidance of rectangular vehicles has been justified in detail within the HJB framework.
}

A next and challenging step would be to 
analyze the present approach using more precise models, 
such as the  7-dimensional "single track" model in~\cite{mtns,IlariaPHD}.
Ongoing works also concern the sensitivity of the secure region with respect to small disturbances of the data.

\ifUseBibTeX
\bibliographystyle{plain}
\bibliography{abbreviations_short,references}
\else
\def\cprime{$'$} \def\cydot{\leavevmode\raise.4ex\hbox{.}}
  \def\cfac#1{\ifmmode\setbox7\hbox{$\accent"5E#1$}\else
  \setbox7\hbox{\accent"5E#1}\penalty 10000\relax\fi\raise 1\ht7
  \hbox{\lower1.15ex\hbox to 1\wd7{\hss\accent"13\hss}}\penalty 10000
  \hskip-1\wd7\penalty 10000\box7} \def\Dbar{\leavevmode\lower.6ex\hbox to
  0pt{\hskip-.23ex \accent"16\hss}D} \def\dbar{\leavevmode\hbox to
  0pt{\hskip.2ex \accent"16\hss}d}

\fi

\if{
\appendix\label{app:scheme}
}\fi

\end{document}